
\documentclass[12pt]{amsart}
\usepackage{amsmath, amssymb, graphicx, hyperref}
\usepackage{epsfig}
\usepackage{autonum}
\usepackage{derivative}
\usepackage[usenames,dvipsnames,svgnames]{xcolor}
\usepackage[top=1in, bottom=1in, left=1in, right=1in]{geometry} 


\newcommand{\R}{\mathbb{R}}

\newcommand{\C}{\mathbb{C}}
\newcommand{\vphi}{\varphi}
\newcommand{\vt}{\vert}
\newcommand{\hf}[1][1]{\frac{#1}{2}}

\newcommand{\sumfty}[1][n]{\sum_{#1=1}^\infty}
\newcommand{\proj}{\operatorname{proj}}

\newcommand{\diff}{\mathop{}\!d}

\newcommand{\ta}{ \ \text{ and } \ }
\emergencystretch=1em

\newtheorem{theorem}{Theorem}[section]
\newtheorem{conj}[theorem]{Conjecture}
\newtheorem{lemma}[theorem]{Lemma}
\theoremstyle{definition}

\theoremstyle{remark}
\newtheorem{remark}[theorem]{Remark}

\begin{document}
	\title{Non-zero values of a family of approximations of a class of $L$-functions}
	\author{Arindam Roy and Kevin You}
	\address{Department of Mathematics and Statistics, University of North Carolina at Charlotte, 9201 University City Blvd., Charlotte, NC 28223, USA}
	\email{aroy15@charlotte.edu}
	\address{Department of Mathematical Sciences
		Carnegie Mellon University
		Wean Hall 6113
		Pittsburgh, PA 15213}
	\email{kevinyou@andrew.cmu.edu}
	
	\subjclass[2020]{Primary 11M41; Secondary 11M26, 11N64, 11M99.}
	
	\thanks{\textit{Keywords and phrases.}  Dirichlet polynomials,  distribution of $a$-values, Riemann Zeta approximation, zero-density}
	\begin{abstract}
		Consider the approximation $\tilde{Z}_N(s) = \sum_{n=1}^N n^{-s} + \chi(s) \sum_{n=1}^N n^{1-s}$ of the Riemann zeta function $\zeta(s)$, 
		where $\chi(s)$ is the ratio of the gamma functions. 
		This arise from the approximate functional equation of $\zeta(s)$. 
		Gonek and Montgomery have shown that $\tilde{Z}_N(s)$ has 100\% of its zeros lie on the critical line. 
		Recently, $a$-values of $\tilde{Z}_N(s)$ for non-zero complex number $a$ are studied and it has been shown that the $a$-values of $\tilde{Z}_N(s)$ are cluster arbitrarily close to the critical line. 
		In this paper, we show that, despite the above, 0\% of non-zero $a$-values of $\tilde{Z}_N(s)$ actually lie on the critical line itself. For $\zeta(s)$ at most $50\%$ non-zero $a$-values lie on the critical line is known due to Lester. We also extend our results to approximations of a wider class of $L$-functions.
	\end{abstract}
	
	\maketitle 
	
	\section{Introduction}
	Let us denote $s:=\sigma+it$ with $\sigma$ and  $t$ are real numbers. The Riemann zeta function $\zeta(s)$ is defined by the Dirichlet series $\zeta(s):=\sum_{n=1}^{\infty}n^{-s}$ for  $\sigma>1$.
	The functional equation of $\zeta(s)$ is given by
	\begin{align}
		\zeta(s) = \chi(s)\zeta(1-s)\quad \text{where}\quad 
		\chi(s) := \pi^{s-1/2}\frac{\Gamma((1-s)/2)}{\Gamma(s/2)}.
	\end{align}
	It is well-known from the works of Riemann and von Mangoldt that the non-trivial zeros of $\zeta(s)$ are located inside the critical strip $0 < \sigma <1$. Moreover, if $\beta+i\gamma$ denote non-trivial zeros of $\zeta(s)$ and $N(T)$ denotes the number of such zeros in the rectangle $0<\beta<t$ and $0 < \gamma <T$ then
	\begin{align}
		N(T) = \frac{T}{2 \pi} \bigg(\log \frac{T}{2\pi} -1 \bigg) + \frac{7}{8} +O(\log T)
	\end{align}
	as $T \to \infty$, see e.g. \cite{titchmarsh} for properties of $\zeta(s)$. Let $N_0(T)$ denote the number of non-trivial zeros with $\beta=1/2$ and  $0<\gamma<T$. We then define
	\begin{align}
		\kappa  = \mathop {\lim \inf }\limits_{T \to \infty } \frac{{{N_0}(T)}}{{N(T)}} .
	\end{align}
	The history behind the value of $\kappa$ can be found in \cite{feng}. Some significant milestones for the value of $\kappa$ as follows. In 1942, Selberg \cite{Selberg//posconst} establish that a non-zero proportion of zeros lie on the critical line, or $\kappa > 0$.
	Levinson \cite{Levinson//third} later showed in 1974 that $\kappa > .3474$. This was improved by Conrey \cite{Conrey} to $\kappa \ge .4088$ in 1989. The limit $\kappa = 1$ consider a weaker form of the Riemann hypothesis.
	
	Instead of asking when $\zeta(s) = 0$, one can ask for a fixed non-zero complex number $a \in \C$, when does $\zeta(s) = a$. 
	Levinson \cite{Levinson//a-vals} has showed that all such occurrences, 
	called $a$-values, lie arbitrarily close to the critical line. 
	However, it is conjectured by Selberg
	\cite{selberglost} that for $a \neq 0$, almost none of non-trivial $a$-values actually lie on the critical line, 
	unlike that for $a=0$ or Riemann's conjecture. 
	In 2015, Lester \cite{lester} proved that at most half of non-zero $a$-values of $\zeta(s)$ lie on the critical line. 
	A weaker version of Selberg's conjecture can be stated as
	\begin{conj}
		\label{conj:Lesterconjecture}
		Consider $a\in\mathbb C - \{0\}$. Then $0\%$ of the $a$-values of $\zeta(s)$ lie on the critical line in the sense
		\begin{align}
			\lim_{T\to\infty}\frac{N_{a,0}(T)}{N_{a}(T)} = 0
		\end{align}
		where $N_{a,0}(T)$ is the number of $a$-values on the critical line up to $T$, and $N_a(T)$ is the number of $a$-values up to $T$.
	\end{conj}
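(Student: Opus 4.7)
The plan is to reduce the conjecture to showing $N_{a,0}(T) = o(T\log T)$. By applying Littlewood's lemma to $\zeta(s) - a$ on a rectangle covering the critical strip and using the approximate functional equation, one obtains $N_a(T) \sim \frac{T}{2\pi}\log\frac{T}{2\pi}$, matching the main term of $N(T)$. Thus it suffices to show that the number of real $t \in [0, T]$ solving $\zeta(\frac{1}{2}+it) = a$ grows strictly slower than $T\log T$.

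My first step would be to split this counting problem using the Hardy $Z$-function $Z(t) = e^{i\theta(t)} \zeta(\frac{1}{2}+it)$, which is real-valued. Writing $a = |a|e^{i\alpha}$, the equation $\zeta(\frac{1}{2}+it) = a$ is equivalent to the simultaneous system $|\zeta(\frac{1}{2}+it)| = |a|$ and $\arg \zeta(\frac{1}{2}+it) \equiv \alpha \pmod{2\pi}$. The argument condition alone admits $\asymp T\log T$ solutions, since $\arg \zeta$ winds at average rate $\frac{1}{2}\log t$. The magnitude condition should supply the saving: by Selberg's central limit theorem, $\log|\zeta(\frac{1}{2}+it)|$ is asymptotically Gaussian with variance $\frac{1}{2}\log\log T$, so the density of $t \in [0,T]$ with $\log|\zeta(\frac{1}{2}+it)|$ within $\epsilon$ of the fixed value $\log|a|$ is $O(\epsilon/\sqrt{\log\log T})$. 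Heuristically this yields $N_{a,0}(T) = O(T\log T/\sqrt{\log\log T}) = o(T\log T)$, as desired.

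To make the heuristic rigorous, I would pass to a discrete count via a Littlewood-lemma argument on a thin rectangle of width $\delta \sim 1/\log T$ attached to the critical line, bounding $N_{a,0}(T)$ in terms of a boundary integral of $\log^+|\zeta(\frac{1}{2}+\delta+it) - a|$ and then invoking the asymptotic for $\int_0^T |\zeta(\frac{1}{2}+\delta+it) - a|^2 \,dt$. This would be combined with a discrete mean-value estimate for $|\zeta'(\frac{1}{2}+it_n)|$ at the on-line $a$-values $t_n$, using the Taylor expansion $\zeta(\frac{1}{2}+it) - a \approx i\zeta'(\frac{1}{2}+it_n)(t-t_n)$ to convert a lower bound on the derivative into an upper bound on the local density of crossings. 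The Selberg CLT saving would then be extracted by a partition of $[0,T]$ according to the size of $\log|\zeta(\frac{1}{2}+it)|$ and a pointwise fourth-moment-type bound on each piece.

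The main obstacle is precisely where Lester's $50\%$ bound saturates. A pure second-moment Cauchy--Schwarz argument yields a proportion bounded by the constant $\frac{1}{2}$, not less, because the extremal case occurs when all $a$-values cluster equally on either side of the critical line; to push below $\frac{1}{2}$ and down to $0$ one must distinguish \emph{on-line} crossings from nearby off-line $a$-values. This seems to require either (i) a local density estimate for the joint distribution of $(\log|\zeta|,\arg\zeta)$ on the critical line at the scale of individual $a$-values, finer than the global Gaussian law from Selberg's CLT; or (ii) unconditional shifted fourth-moment asymptotics for $\zeta(\frac{1}{2}+it)-a$ strong enough to detect cancellation exactly at on-line $a$-values. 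This is why Conjecture~\ref{conj:Lesterconjecture} remains open, and it is what motivates the present paper to establish the analogous $0\%$ statement in the more tractable finite Dirichlet approximation $\tilde Z_N(s)$.
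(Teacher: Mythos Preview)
The statement you are addressing is Conjecture~\ref{conj:Lesterconjecture}, which the paper states as an \emph{open} problem; the paper gives no proof of it. Your proposal is therefore not to be compared against a proof in the paper, because there is none. You yourself recognize this in your final paragraph: the heuristic based on Selberg's central limit theorem suggests the right order of saving, but the step from the global Gaussian law to a pointwise density estimate at the scale of individual $a$-values is exactly the missing ingredient, and your Littlewood/second-moment outline saturates at Lester's $50\%$ bound rather than reaching $0\%$. So as a proof of the conjecture, the proposal has a genuine and acknowledged gap.

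What the paper actually proves is the analogue for the approximations $\tilde Z_N(s)$ and more generally $\zeta_N(s)$ (Theorems~\ref{firstmainthm} and~\ref{Critical}), and the method there is entirely different from your Selberg-CLT heuristic. The paper exploits the finite, explicit structure of $\zeta_N$ on the critical line: writing $\zeta_N(\tfrac{\delta}{2}+it)=z+G\bar z$ with $|G|=1$, one observes that the two summands have equal modulus, so an $a$-value on the line forces the orthogonal projection $2\operatorname{proj}_{\arg a} z = |a|$. This reduces the count to the number of solutions of a single real equation in $t$ involving the finite sum $F_N$, which is then bounded by counting zeros of its $t$-derivative via Jensen's formula, yielding $N_{a,0}(T,U)=O(U\log N)$. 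None of this machinery is available for $\zeta(s)$ itself, which is why the conjecture remains open.
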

	Let $\Lambda(n)$ be the von-Mangoldt function and $\Lambda_X(n)=\Lambda(n)$ when $n\leq X$, $\Lambda_X(n)=\Lambda(n)(2-\log n/\log x)$ when $X<n\leq X^2$ and $\Lambda_X(n)=0$ otherwise. 
	Gonek \cite{gonek2007finite} introduced a function 
	\[ 
	P_X(s)=\exp\left(\sum_{n\leq X^2}\frac{\Lambda_X(n)}{n^s\log n}\right), 
	\]
	which approximates the partial Euler product $\prod_{p\leq X^2} (1 - p^{-s})^{-1}$ of $\zeta(s)$. Then he considered the function
	\begin{align}
		Z_N(s)=P_N(s)+\chi(s)P_N(\bar{s}), 
	\end{align}
	which is a good approximation of $\zeta(s)$, to demonstrate a model of $\zeta(s)$ exhibiting some basic features of it. If one assumes the Riemann hypothesis for $\zeta(s)$, Gonek proved that $Z_N(s)$ are good approximations of $\zeta(s)$ in $\sigma\geq1/2+\epsilon$, 
	$Z_N(s)$ and $\zeta(s)$ have approximately same numbers of zeros on the critical line,
	the zeros of $Z_N(s)$ are simple and in between two zeros of $\zeta(s)$, 
	and finally $Z_N(1/2+it)$ tends to $2\zeta(1/2+it)$ for large $N$. 
	Finally, he proved unconditionally the Riemann hypothesis for $Z_N(s)$ for $|t|>10$. One drawback on working with $Z_N(s)$ is the term $P_N(\bar{s})$. This made the functions $Z_N(s)$ non-analytic and many tools from complex analysis cannot be applied on $Z_N(s)$.
	
	Gonek and Montgomery \cite{Montgomery-Gonek} also considered the models $\Tilde{Z}_N(s)$ instead of $Z_N(s)$. Here 
	\begin{align}
		\Tilde{Z}_N(s):=\sum_{n\leq N}\frac{1}{n^s}+\chi(s)\sum_{n\leq N}\frac{1}{n^{1-s}}.   
	\end{align}
	Note that, a form of the approximate functional equation for the Riemann zeta function can be written as 
	\begin{align}
		\zeta(s)=\sum_{n\leq\sqrt{|t|/2\pi }}\frac{1}{n^s}+\chi(s)\sum_{n\leq \sqrt{|t|/2\pi}}\frac{1}{n^{1-s}}+O(|t|^{-\sigma/2})
	\end{align}
	for $|t|\geq 1$ and $|\sigma-1/2|\leq 1/2$. This shows that $\Tilde{Z}_N(s)$ can be a good approximation of $\zeta(s)$ for suitable choices of $N$. Spira \cite{Spira//approx_and_RH} first proved that $\Tilde{Z}_1(s)$ and $\Tilde{Z}_2(s)$ satisfies the Riemann hypothesis. 
	For $N \geq 3$, Gonek and Montgomery \cite{Montgomery-Gonek} 
	showed that almost all ($100\%$) non-trivial zeros of $\Tilde{Z}_N(s)$ lie on the critical line, in the limit $T \rightarrow \infty$. Distributions of zeros of approximations of other $L$-functions are also studied in \cite{LiNataRoy,LiRoyZah}. For any non-zero complex number $a$, Crim, Frendreiss and the first author initiated the study of the $a$-values of $\Tilde{Z}_N(s)$ in \cite{CrFrRo}. They showed in \cite{CrFrRo} that the $a$-values of  $\Tilde{Z}_N(s)$ are dense to the critical line similar to the denseness of $a$-values of $\zeta(s)$ (see \cite{Levinson//a-vals}). Although the $a$-values of $\zeta(s)$ on the critical line are studied and it is  conjectured that $0\%$ of total non-zero $a$-values lie on the critical line due to Selberg, nothing are known about the $a$-values of $\Tilde{Z}_N(s)$ on the critical line when $a\neq 0$. 
	
	Our main goal in this paper to show that the weaker version of Selberg's Conjecture \ref{conj:Lesterconjecture} holds true for $\Tilde{Z}_N(s)$. In particular we have the following result. 
	\begin{theorem}\label{firstmainthm}
		Let $a\neq 0$. Then $0\%$ of $a$-values of $\Tilde{Z}_N(s)$ lie on the critical line. 
	\end{theorem}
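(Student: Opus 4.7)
The plan is to pass to a real-valued Hardy-type companion of $\tilde Z_N$ on the critical line, recast the $a$-value condition as a level-set equation for a finite generalized trigonometric polynomial, and then exploit that such an equation has only $O_N(T)$ solutions in $[0,T]$ while $N_a(T)$ is of order $T\log T$.

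First, using the functional equation $\tilde Z_N(s)=\chi(s)\tilde Z_N(1-s)$ and writing $\chi(1/2+it)=e^{-2i\theta(t)}$ with the usual Riemann--Siegel $\theta$, I would introduce
\[
G(t):=e^{i\theta(t)}\tilde Z_N(1/2+it)=2\sum_{n\le N}\frac{\cos(\theta(t)-t\log n)}{\sqrt{n}},
\]
which is real-valued on $\R$. Writing $a=|a|e^{i\alpha}$, the equation $\tilde Z_N(1/2+it)=a$ becomes $G(t)=ae^{i\theta(t)}$; the reality of $G$ forces $\theta(t)\equiv -\alpha\pmod{\pi}$, producing a discrete sequence $(t_k)$ with $\theta(t_k)=-\alpha+k\pi$, and the remaining condition $G(t_k)=(-1)^k|a|$ simplifies, after substituting the value of $\theta(t_k)$ and using the evenness of cosine, to
\[
F(t_k)=|a|,\qquad F(t):=2\sum_{n\le N}\frac{\cos(\alpha+t\log n)}{\sqrt{n}}.
\]
In particular $N_{a,0}(T)\le\#\{t\in[0,T]:F(t)=|a|\}$.

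Second, $F(t)-|a|$ is a nontrivial real generalized trigonometric polynomial in $t$ with $N$ real frequencies $\log 1,\log 2,\dots,\log N$, all bounded by $\log N$. By classical bounds on real zeros of such exponential sums --- for example a Polya-type estimate, or the elementary observation that the total argument variation of such a sum on $[0,T]$ is $O(T\cdot\max|\lambda_j|)$ --- one obtains $\#\{t\in[0,T]:F(t)=|a|\}\le C(N)\,T$, so that $N_{a,0}(T)\le C(N)\,T$. Finally, a standard argument-principle count applied to $\tilde Z_N(s)-a$ in a narrow strip $|\sigma-1/2|\le c$ of height $T$ gives $N_a(T)\sim(T/2\pi)\log T$, the dominant contribution coming from the change of $\arg\chi(s)$, the same mechanism that yields Gonek and Montgomery's count $N_0(T)\sim(T/2\pi)\log T$ of critical-line zeros of $\tilde Z_N$. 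Dividing the two bounds yields $N_{a,0}(T)/N_a(T)=O_N(1/\log T)\to 0$, as required.

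The main obstacle is the second step: since the frequencies $\log n$ are incommensurable, the familiar degree bound for ordinary trigonometric polynomials does not apply verbatim, and one needs a bound on the density of real level-crossings of a generalized trigonometric polynomial that scales only with the spectral width and the number of frequencies. Such bounds are available in the literature and must be cited (or re-derived by a Rolle-type induction on the number of frequencies) carefully. It is precisely the finiteness of the sum defining $\tilde Z_N$ that makes this work; for $\zeta(s)$ itself the corresponding sum is infinite, which is why the analogue of Conjecture~\ref{conj:Lesterconjecture} remains open there.
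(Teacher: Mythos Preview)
Your reduction is the paper's own, dressed slightly differently. The paper writes $\tilde Z_N(\tfrac12+it)=z+\chi(\tfrac12+it)\bar z$ with $z=F_N(\tfrac12+it)$, observes $|z|=|\chi\bar z|$, and uses an orthogonal-projection argument (its Lemma~\ref{projlemma}) together with Pythagoras to conclude directly that any $a$-value on the critical line forces $2\operatorname{proj}_\alpha z=|a|$; this is exactly your equation $F(t)=|a|$ with $F(t)=2\sum_{n\le N}n^{-1/2}\cos(\alpha+t\log n)$. Your route via the Hardy function and the discrete sequence $(t_k)$ is correct but takes an extra detour to reach the same endpoint.

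Where you genuinely differ is the final counting step, and this is also where your sketch is thinnest. You appeal to an unspecified P\'olya-type or argument-variation bound for real zeros of a generalized trigonometric polynomial and obtain only $N_{a,0}(T)\le C(N)\,T$. The paper instead passes to the derivative (Rolle), extends $g(t)=F'(t)$ to an entire function, produces a point $c$ with $|g(ic)|\ge b_2/10$ by averaging $g$ against $\sin(t\log\lambda_2+\phi_2)$ (its Lemma~\ref{Trig}), and then applies Jensen's formula on a disk of radius $2U$ about $ic$. This yields the explicit, $a$-independent bound $N_{a,0}(T,U)=O(U\log N)$. The obstacle you yourself flag is precisely the part the paper makes concrete; your proposal would be complete once you supply such an argument rather than cite it.

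One further point you omit: $N_{a,0}$ counts $a$-values with multiplicity, and your inequality $N_{a,0}(T)\le\#\{t:F(t)=|a|\}$ needs either that multiplicities transfer or that the $a$-values on the line are simple. The paper handles this up front by showing (via monotonicity of $\arg\chi(\tfrac12+it)$, its Lemma~\ref{Monotone}) that for large $t$ every $a$-value of $\tilde Z_N$ on the critical line is simple; you should include this step.
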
 
	
	Actually, we establish the above result in more general setting, i.e.,  to the approximations like $\tilde{Z}_N$ to a wider class of $L$-functions. 
	A general Dirichlet series is defined by 
	\begin{align} 
		F(s) = \sumfty \frac{a_n}{\lambda_n^s}, \label{def:F}
	\end{align}
	where $a_n$ are complex numbers, $\lambda_n$ are real numbers with $\lambda_n \geq 1$, and $\lambda_n$ strictly increasing and the series is absolutely convergent in some half-plane. 
	We let the first term be the constant term, or $\lambda_1 = 1$. 
	It is possible that $a_1 = 0$. For our results we assume that $F$ has at least two non-constant terms, or $a_2\neq 0$ and $a_3\neq 0$. 
	We also assume that $\vt a_n \vt$ and $\lambda_n$ are both grow like a polynomial in $n$.
	
	Let us assume that $F$ satisfies a functional equation of the form
	\begin{align} \Omega(s) F(s) = \lambda^{2s - \delta} \Omega(\delta - s) F(\delta - s) \end{align} 
	for some $\lambda > 0$, 
	$\delta \in \R$ and $\Omega(s)$ is a product of Gamma functions
	\begin{align} \Omega(s) = \prod_{i=1}^k \Gamma(\alpha_i s + \beta_i) \end{align}
	with $\alpha_i > 0$ and $\beta_i \in \R$.
	Chandrasekharan and Narasimhan \cite{CHANDRASEKHARAN1963} proved that, under suitable conditions, $
	F$ has an approximate functional equation of the form
	\begin{align} F(s) = \sum_{\lambda_n \leq x} \frac{a_n}{\lambda_n^s} + \lambda^{2s-\delta} \frac{\Omega(\delta-s)}{\Omega(s)} 
		\sum_{\lambda_n \leq y} \frac{a_n}{\lambda_n^{\delta-s}} + R(x,y,\vt t \vt). \end{align} 
	Given $F(s)$, $\Omega(s)$, and $\lambda > 0$ of the form above, we will be investigating the approximations
	\begin{align} 
		\zeta_N(s) = F_N(s) + G(s) F_N(\delta-s),\label{def:zetN}
	\end{align}
	where we have defined
	\begin{align} 
		F_N(s) := \sum_{n=1}^N \frac{a_n}{\lambda_n^s} \ta G(s) := \lambda^{2s-\delta} \frac{\Omega(\delta-s)}{\Omega(s)}. \label{def:Del}
	\end{align}
	Note that $G(s) G(\delta-s) = 1$ and $\zeta_N(s) = G(s) \zeta_N(\delta-s)$. 
	We use the convention that the critical line is the line of symmetry $\sigma = \hf[\delta]$. We also make a note that for a given $a \in \C$, an $a$-value of $f(s)$ is a zero of $f(s) - a$,
	counted with multiplicity. 
	
	Let us denote the number of $a$-values of $\zeta_N(s)$ within the region $T < t < T + U$ by $N_a(T, U)$.  Additionally, the $a$-values lying within $\vt \sigma - \hf[\delta] \vt < \epsilon$ of the critical line is denoted by $N_{a,\epsilon}(T,U)$. Also, the $a$-values lying on the critical line $\sigma = \hf[\delta]$ is denoted by $N_{a,0}(T,U)$.
	Through out the paper, we always assume that $T,U$, and $\epsilon$ are chosen so that there are no $a$-values 
	lying on the boundary of the rectangle require for the proof or endpoints of the line segment. This can be achieved since the $a$-values are finite in a bounded region. 
	We are interested in $a$-values of $\zeta_N(s)$ for sufficiently large $t$, as there may be infinitely many
	regularly distributed $a$-values of $\zeta_N(s)$ for small $t$. 
	In the first theorem we give an asymptotic of the count of the $a$-values in a rectangle. We make explicit the dependence of the error terms on $N$.  For convenience, let   
	$ A = \sum_{i=1}^k \alpha_i$ and $ B = \sum_{i=1}^k \alpha_i \log \alpha_i - \alpha_i$. 
	\begin{theorem}\label{Count}
	For any $a \in \C$, 
	there exists some $T_a, \gamma > 0$ such that if $T \geq T_a N^\gamma$, then 
	\begin{align} N_a(T,U) &=  \frac{A}{\pi}  \left( (T+U) \log (T+U) - T \log T \right) + \frac{B - \log \lambda + \Psi}{\pi} U+ O_a(N^\gamma \log (T+U)) \end{align} 
	where 
	\begin{align}
		2\Psi = \begin{cases}
			\log \lambda_2 &\text{ if } a \neq a_1 = 0 \\
			-\log \lambda_2 &\text{ if } a = a_1 \neq 0 \\
			0 &\text{ otherwise}
		\end{cases}. 
	\end{align}
	\end{theorem}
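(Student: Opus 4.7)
The natural approach is to apply the argument principle to $\zeta_N(s) - a$ on a rectangle $R$ with vertices $\sigma_0 + iT$, $\sigma_1 + iT$, $\sigma_1 + i(T+U)$, $\sigma_0 + i(T+U)$, where $\sigma_1 = \sigma_1(N)$ is taken of order $\log N$ and $\sigma_0 := \delta - \sigma_1$ is its reflection through the critical line. The size $\sigma_1 \asymp \log N$ is what forces the threshold $T \geq T_a N^\gamma$: $\sigma_1$ must be large enough to make the tail of $F_N$ subordinate to its first nontrivial term on the right edge, while the cross term $G(s)F_N(\delta - s)$ is simultaneously killed by the Stirling decay of $G$. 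The argument principle gives
\[
2\pi N_a(T, U) \;=\; \Delta_{\partial R}\arg\bigl(\zeta_N(s) - a\bigr),
\]
which I analyze edge by edge.

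For the left edge I exploit the identity $\zeta_N(s) = G(s)\zeta_N(\delta - s)$ together with $G(s)G(\delta-s) = 1$ to factor
\[
\zeta_N(\sigma_0 + it) - a \;=\; G(\sigma_0 + it)\bigl[\zeta_N(\sigma_1 - it) - aG(\sigma_1 - it)\bigr],
\]
so the left-edge argument variation splits as $\Delta\arg G(\sigma_0 + it) + \Delta\arg[\zeta_N(\sigma_1 - it) - aG(\sigma_1 - it)]$. The first summand supplies the principal asymptotic: Stirling's formula applied to $\Omega(\delta - s)/\Omega(s)$, combined with the prefactor $\lambda^{2s - \delta}$, yields $\arg G(\sigma + it) = -2At\log t - 2Bt + 2t\log\lambda + O(1)$ uniformly for $\sigma$ in a bounded strip, so that tracking from $t = T+U$ down to $t = T$ and dividing by $2\pi$ produces exactly the main term
\[
\frac{A}{\pi}\bigl((T+U)\log(T+U) - T\log T\bigr) + \frac{B - \log\lambda}{\pi}\, U.
\]

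The $\Psi U/\pi$ correction emerges from the right edge together with the bracket on the left. On the right, $\zeta_N - a = (F_N - a) + G \cdot F_N(\delta - \cdot)$; the cross term is negligible for $t \geq T_a N^\gamma$ by Stirling decay of $G$ against the $N$-controlled growth of $F_N(\delta - \cdot)$, leaving $F_N(\sigma_1 + it) - a = (a_1 - a) + a_2\lambda_2^{-(\sigma_1 + it)} + O(\lambda_3^{-\sigma_1})$. On the left, the analogous expansion is $\zeta_N(\sigma_1 - it) - aG(\sigma_1 - it) \approx a_1 + a_2\lambda_2^{-(\sigma_1 - it)} - aG(\sigma_1 - it) + O(\lambda_3^{-\sigma_1})$, with $|aG|$ super-polynomially small in $t$. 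A case split on which of $a_1 - a$, $a_1$, the oscillations $a_2\lambda_2^{-(\sigma_1 \pm it)}$, and the decaying $aG$ is dominant yields the piecewise value of $\Psi$: when $a = a_1 \neq 0$ the oscillation on the right edge dominates while the constant $a_1$ dominates the left residual, contributing $2\Psi = -\log\lambda_2$; when $a \neq a_1 = 0$ the roles swap, with the oscillation $a_2\lambda_2^{-(\sigma_1 - it)}$ dominating the left residual, contributing $2\Psi = +\log\lambda_2$; the remaining cases yield $O(1)$ on both edges and $\Psi = 0$. Division by $2\pi$ produces the $\Psi U/\pi$ term.

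The two horizontal edges give the error: the argument variation of $\zeta_N(s) - a$ along a horizontal segment is bounded by $\pi$ times the number of zeros of $\operatorname{Re}(\zeta_N(s) - a)$ on it, and Jensen's formula on a disk of radius $\asymp \log N$ combined with the convexity bound $|\zeta_N(s)| \ll N^{O(1)}(|t|+1)^{O(\sigma_1)}$ in the enlarged strip yields the claimed $O_a(N^\gamma\log(T+U))$. The main technical obstacle is the uniform case analysis producing $\Psi$: for each arithmetic case of $(a, a_1)$ one must verify that the nominated dominant term genuinely beats all competitors (in particular that $|a_2|\lambda_2^{-\sigma_1}$ dominates $|aG|$ once $\sigma_1 \asymp \log N$ and $t \geq T_a N^\gamma$), and the constants $T_a, \gamma$ must be chosen uniformly across all cases, which is precisely what pins down $\gamma$ in terms of the polynomial-growth exponents of $|a_n|$ and $\lambda_n$.
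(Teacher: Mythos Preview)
Your approach is essentially the paper's: argument principle on a rectangle enclosing all $a$-values, the functional-equation factorization on the left edge so that Stirling for $\arg G$ produces the main term, a case split on $(a,a_1)$ for the $\Psi$-term arising from the right edge and the left-edge residual, and Jensen's formula for the horizontal edges. Your left-edge bracket $\zeta_N(\delta-s)-aG(\delta-s)$ is exactly the paper's $\zeta_N^*(\delta-s)+a$ with $F_N^*=F_N-a$, just written without introducing the auxiliary series.

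The one implementation difference is the rectangle width: the paper takes half-width $r=\sigma_a N^\gamma$ so that Theorem~\ref{Strip} applies verbatim to guarantee no $a$-values lie outside, whereas you take $\sigma_1\asymp\log N$. Your narrower rectangle is valid and even gives a sharper horizontal-edge bound, but it is not covered by Theorem~\ref{Strip} as stated; you need the additional observation that once $t\geq T_aN^\gamma$, the Stirling decay of $G$ in $t$ already kills the cross term $G(s)F_N(\delta-s)$ for any $\sigma$ beyond a fixed constant, so the $a$-value--free region extends to constant width. Relatedly, your stated reason for needing $\sigma_1\asymp\log N$---making the tail of $F_N$ subordinate to its first nontrivial term---is slightly off: by Lemma~\ref{Fn bound} that tail is controlled uniformly in $N$ once $\sigma$ exceeds the abscissa of absolute convergence, so a large constant $\sigma_1$ already suffices. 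None of this affects correctness.
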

	\begin{remark} Our results are generalized the results of \cite{Montgomery-Gonek}, 
	and \cite{CrFrRo}. For example if we choose $A = \hf, B = \hf \log \hf - \hf$, $\lambda = \sqrt{\pi}$, 
	and $a_1 = 1$, so the expression
	for $N_a(T,U)$ reduces to 
	\begin{align} 
		N_a(T,U) &=  \frac{1}{2\pi}  \left( (T+U) \log (T+U) - T \log T \right) + \frac{U}{2\pi} \log \frac{1}{2\pi} + 
		\frac{2 \Psi - 1}{2\pi} U \\
		&\hspace{3in}+ O_a(N^\gamma \log (T+U)) \end{align}
	which agrees with that obtained in  \cite{CrFrRo}. 
	We additionally specified that 
	the error term is bounded by a polynomial in $N$. 
	This theorem counts the total number of $a$-values with imaginary part bounded.
	\end{remark}
	In the next theorem we give an asymptotic of the counts the $a$-values those are at least $\epsilon$-distance from  the critical line. 
	\begin{theorem}\label{Cluster}
	For any $a \in \C$, there exists some $T_a, \gamma > 0$ such that if $T \geq T_a N^\gamma$, then 
	for any $\epsilon > 0$, 
	\begin{align} N_a(T,U) - N_{a,\epsilon}(T,U) = O_a(U \log N /\epsilon) + O_a(N^{2\gamma} \log(T+U)/\epsilon). \end{align}
	\end{theorem}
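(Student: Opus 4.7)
The plan is to apply Littlewood's lemma to two auxiliary functions on rectangles in order to reduce the count of $a$-values far from the critical line to boundary integrals. For $a$-values with $\beta \geq \delta/2 + \epsilon$, let $f(s) = \zeta_N(s) - a$ and work on $R_+ = [\delta/2, \delta/2 + \sigma_1] \times [T, T+U]$ with $\sigma_1$ a large constant. For $a$-values with $\beta \leq \delta/2 - \epsilon$, use the functional equation $\zeta_N(s) = G(s)\zeta_N(\delta - s)$: the reflection $s \mapsto \delta - s$ sends such $a$-values of $\zeta_N$ to zeros of $h(s') := \zeta_N(s') - a G(s')$ with $\operatorname{Re}(s') \geq \delta/2 + \epsilon$, living on a rectangle congruent to $R_+$ at height $[-T-U, -T]$ (using $G(s)G(\delta - s) = 1$). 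Both problems thereby reduce to applying Littlewood to an analytic function on a right-side rectangle.

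For each auxiliary function $\phi \in \{f, h\}$, Littlewood's lemma on the appropriate right-side rectangle gives
\begin{align*}
2\pi \sum_{\substack{\phi(\rho) = 0 \\ \rho \in \text{rectangle}}} \left(\Re \rho - \tfrac{\delta}{2}\right) = \int \log|\phi(\tfrac{\delta}{2} + it)|\, dt - \int \log|\phi(\tfrac{\delta}{2} + \sigma_1 + it)|\, dt + (\text{horizontal arg terms}),
\end{align*}
with the $dt$-integrals taken over the appropriate interval. Each $a$-value at distance $\geq \epsilon$ from the critical line contributes at least $\epsilon$ to the LHS.

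The critical-line integral I estimate by Jensen's inequality (concavity of $\log$),
\begin{align*}
\int \log|\phi(\tfrac{\delta}{2} + it)|\, dt \leq \frac{U}{2} \log\!\left( \frac{1}{U} \int |\phi(\tfrac{\delta}{2} + it)|^2\, dt \right),
\end{align*}
and since $|G(\delta/2 + it)| = 1$, the triangle inequality reduces the second moment to that of Dirichlet polynomials. The Montgomery-Vaughan mean-value theorem then gives $\int |\phi(\delta/2 + it)|^2\, dt \ll (U + N^{O(1)}) \sum_{n \leq N} |a_n|^2/\lambda_n^\delta = O(U \cdot N^{O(1)})$ once $T \geq T_a N^\gamma$, and taking logarithms yields a contribution of $O(U \log N)$. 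On the far vertical edge $\sigma = \delta/2 + \sigma_1$, choosing $\sigma_1$ a sufficiently large constant (depending on $a, a_1, a_2$) and using Stirling to show that $G(s) F_N(\delta - s) = O(|t|^{-2A\sigma_1} N^{O(1)})$ is negligible for $T \geq T_a N^\gamma$, both $|f|$ and $|h|$ are uniformly bounded above and bounded away from zero, so these integrals contribute $O(U)$.

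The horizontal arg integrals are bounded by $O(N^\gamma \log(T + U))$ by estimating the variation of $\arg \phi$ along each horizontal segment of length $\sigma_1 = O(1)$ via the number of sign changes of $\Re \phi$, controlled by Jensen's formula on a disk of bounded radius --- the same technique used in the proof of Theorem~\ref{Count}. Assembling all contributions and dividing by $\epsilon$ yields the claimed $O_a(U \log N/\epsilon) + O_a(N^{2\gamma}\log(T+U)/\epsilon)$. The most delicate step will be bounding the $L^2$-norm of $h$ on the critical line: expanding $|h|^2$ produces a cross term of the form $\overline{F_N(s)} G(s) F_N(\delta - s)$ with $|G| = 1$ but oscillating phase, which must be handled by integration by parts against the Stirling phase of $G$ or by a direct Dirichlet-polynomial mean-value estimate for $G$-twisted products. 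Combined with the $N^\gamma$ factor from the horizontal arg bound, this interaction is what produces exactly the $N^{2\gamma}$ error term appearing in the statement.
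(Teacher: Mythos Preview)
Your overall strategy is the same as the paper's: split into $a$-values right and left of the critical line, handle the left side via the functional equation, and apply Littlewood's lemma on a right-side rectangle. Your reflected function $h(s')=\zeta_N(s')-aG(s')$ equals the paper's $\zeta_N^*(s')+a$ (where $\zeta_N^*$ is built from $F_N^*=F_N-a$), so the two reductions are identical.

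The implementation differs in two places, and in both you are making life harder than necessary. First, on the critical line the paper simply uses the pointwise bound $|\zeta_N(\tfrac{\delta}{2}+it)|\le 2|F_N(\tfrac{\delta}{2}+it)|\ll N^\nu$, giving $\int\log|\phi|\,dt\le \nu U\log N+O_a(U)$ directly. Your Jensen\,$+$\,Montgomery--Vaughan route works, but the cross-term you flag as ``delicate'' is a non-issue: for an \emph{upper} bound on $\int|\phi|^2$ one can use the trivial $|\phi|^2\ll N^{2\nu}$ without any oscillatory-integral work. Second, the paper takes the right edge at $\sigma_r=\sigma_a N^\gamma$ (the width from Theorem~\ref{Strip}), whereas you take it at a fixed $\sigma_1$. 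Your choice is actually legitimate under the hypothesis $T\ge T_a N^\gamma$, because then $|G(s)F_N(\delta-s)|\ll t^{-2A\sigma_1}N^{\mu\sigma_1+\nu}$ is small for all $\sigma\ge\tfrac{\delta}{2}+\sigma_1$ (you should say explicitly that this also rules out $a$-values with $\sigma>\tfrac{\delta}{2}+\sigma_1$, so your rectangle catches everything).

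Where you go astray is the accounting for $N^{2\gamma}$. In the paper that factor is purely geometric: the horizontal argument variation is $O_a(N^\gamma\log(T+U))$ (from Jensen's formula on a disk of radius $\asymp N^\gamma$, as in Section~\ref{ssec:horizontal-edges}), and one then integrates over a segment of length $\sigma_r-\tfrac{\delta}{2}\asymp N^\gamma$, giving $N^{2\gamma}\log(T+U)$. With your fixed-width rectangle, the same Jensen argument on a disk of radius $O(1)$ yields argument variation $O(\log(T+U))$, and the integral over a segment of length $\sigma_1=O(1)$ is $O(\log(T+U))$ --- there is no $N^{2\gamma}$ at all, and certainly none coming from $L^2$ cross terms. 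So your proof, once the unnecessary mean-value machinery is removed, actually gives a slightly sharper bound than stated; but your explanation of the $N^{2\gamma}$ term is incorrect.
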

	Along
	with Theorem \ref{Count}, we can conclude that for any fixed $\epsilon$, 
	almost all $a$-values appear within $\epsilon$-distance of the critical line, 
	or in other words, $a$-values are clustered about the critical line. This generalizes the work in \cite{CrFrRo} on the cluster-ness of $a$-values of  $\tilde{Z}_N(s)$.
	
	In the next two theorems we pay our attention of all $a$-values on the critical line. First we give a lower bound of the number of zeros of $\zeta_N(s)$ on the critical line. For $a\neq 0$, we give an upper bound of the number of $a$-values of $\zeta_N(s)$ on the critical line. 
	\begin{theorem}\label{Critical Zero}
	Suppose that the coefficients $a_n$ are real. Then there exists some $T_0 > 0$ such that if $T \geq T_0$, then
	\begin{align} N_{0,0}(T,U) \geq \frac{A}{\pi}   \left( (T+U) \log (T+U) - T \log T \right) + O(U \log N). \end{align}
	\end{theorem}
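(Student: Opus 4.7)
The plan is to reduce the problem on the critical line to counting sign changes of an explicit real-valued function of one real variable, exploiting the functional equation symmetry.

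First, set up the real-valued auxiliary function. On the critical line $\sigma = \delta/2$ we have $\delta - s = \overline{s}$, so combined with $G(s)G(\delta - s) = 1$ this gives $|G(\delta/2+it)| = 1$. Hence we may write $G(\delta/2+it) = e^{i\theta(t)}$ for a continuous real phase $\theta(t)$. Since the $a_n$ are real, $F_N(\delta/2 - it) = \overline{F_N(\delta/2 + it)}$, and therefore
\begin{align*}
Z_N(t) := e^{-i\theta(t)/2}\,\zeta_N\!\bigl(\tfrac{\delta}{2}+it\bigr) = 2\sum_{n=1}^{N}\frac{a_n}{\lambda_n^{\delta/2}}\cos\!\Bigl(t\log\lambda_n + \tfrac{\theta(t)}{2}\Bigr)
\end{align*}
is real-valued on $\mathbb{R}$, and its real zeros in $[T, T+U]$ count $N_{0,0}(T,U)$.

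Next, compute $\theta$ asymptotically by applying Stirling to $\Omega(\delta/2 \pm it)$. This yields
\begin{align*}
\tfrac{\theta(t)}{2} = -At\log t + (\log\lambda - B)t + O(\log t), \qquad \tfrac{\theta'(t)}{2} = -A\log t + O(1),
\end{align*}
so $\theta(t)/2$ is strictly decreasing for $t\ge T_0$. Choose $n_0 \in \{1,2\}$ with $a_{n_0}\ne 0$ (possible by the standing hypothesis $a_2\ne 0$), and define sample points $\{t_r\}\subset[T,T+U]$ by
\begin{align*}
\tfrac{\theta(t_r)}{2} + t_r\log\lambda_{n_0} = r\pi.
\end{align*}
Since $\log\lambda_{n_0}$ is a fixed constant, the derivative of the left side is still $-A\log t + O(1)$, and the number of sample points is
\begin{align*}
R = \frac{A}{\pi}\bigl((T+U)\log(T+U) - T\log T\bigr) + O(U),
\end{align*}
which matches the desired main term.

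At each sample point
\begin{align*}
Z_N(t_r) = 2(-1)^r\!\left(\tfrac{a_{n_0}}{\lambda_{n_0}^{\delta/2}} + E(t_r)\right), \qquad E(t):=\sum_{n\ne n_0}\frac{a_n}{\lambda_n^{\delta/2}}\cos\!\bigl(t\log(\lambda_n/\lambda_{n_0})\bigr).
\end{align*}
Call $r$ \emph{good} if $|E(t_r)| < |a_{n_0}|/\lambda_{n_0}^{\delta/2}$ and \emph{bad} otherwise. For good $r$, $\operatorname{sign}(Z_N(t_r))=(-1)^r\operatorname{sign}(a_{n_0})$, and a short case analysis on consecutive pairs $(r,r+1)$ shows that $Z_N$ changes sign in $(t_r,t_{r+1})$ whenever $r$ and $r+1$ are both good or both bad; only a good-to-bad or bad-to-good transition costs a sign change. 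Thus the number of sign changes of $Z_N$ on $[T,T+U]$, hence $N_{0,0}(T,U)$, is at least $R - 1 - 2B$, where $B$ is the number of bad indices.

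The main obstacle is to establish $B = O(U\log N)$. This requires an $L^2$ mean-square bound for $E$ at the discrete sample points: expanding $\sum_r E(t_r)^2$ by product-to-sum reduces everything to oscillatory sums $\sum_r \cos(t_r c_{m,n})$ with $c_{m,n} = \pm\log(\lambda_m/\lambda_{n_0}) \pm \log(\lambda_n/\lambda_{n_0})$. The diagonal contribution is controlled by $\sum_{n\ne n_0}|a_n|^2/\lambda_n^{\delta}$, which the polynomial-growth hypothesis on $|a_n|$ and $\lambda_n$ renders of the size $\log N$ after discretization against the density of the sample points. The off-diagonal contribution must be bounded by a van der Corput / second-derivative estimate exploiting $\theta''(t)\ne 0$, so that the non-uniform but well-spaced $t_r$ witness cancellation in the frequencies $c_{m,n}$; here the hypothesis $T\ge T_0$, combined with the polynomial bound on $\lambda_n$, is what keeps these frequencies in a regime where the curvature of $\theta$ dominates. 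Putting together the diagonal and off-diagonal bounds produces $\sum_r E(t_r)^2 = O(U\log N)$, and Markov's inequality then yields $B=O(U\log N)$. Inserting this into the sign-change inequality from the preceding paragraph delivers
\begin{align*}
N_{0,0}(T,U) \ge R - 1 - 2B = \frac{A}{\pi}\bigl((T+U)\log(T+U) - T\log T\bigr) + O(U\log N),
\end{align*}
which is the claim.
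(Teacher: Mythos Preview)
Your approach has a genuine gap in the crucial $L^2$ step. The diagonal contribution to $\sum_r E(t_r)^2$ is, up to constants,
\[
\sum_r \sum_{n\neq n_0} \frac{a_n^2}{\lambda_n^{\delta}}\cos^2\!\bigl(t_r\log(\lambda_n/\lambda_{n_0})\bigr)
\;\asymp\; \frac{R}{2}\sum_{n\neq n_0}\frac{a_n^2}{\lambda_n^{\delta}},
\]
and nothing in the hypotheses forces the inner sum to be $O(\log N)$; under the standing polynomial-growth assumptions it can easily be a positive power of $N$. Even in the favourable Riemann-zeta case where this sum is $\asymp\log N$, you still only get $\sum_r E(t_r)^2 \asymp R\log N \asymp U\log T\cdot\log N$, so Markov yields $B=O(U\log T\cdot\log N)$, which swamps the main term $R\asymp U\log T$ and gives no information. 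The underlying obstruction is that your threshold $|a_{n_0}|/\lambda_{n_0}^{\delta/2}$ is a fixed constant, while $|E(t)|\le 2|F_N(\tfrac{\delta}{2}+it)|$ is typically of polynomial size in $N$ on the critical line (e.g.\ $\asymp\sqrt{N}$ for $\tilde Z_N$); almost every sample point is ``bad,'' so the sign-change machinery produces nothing. (Minor: your sentence ``both bad'' does not imply a sign change; only ``both good'' does, though your final inequality $R-1-2B$ is still the correct count.)

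The paper sidesteps this entirely by never comparing $Z_N(t_r)$ to a fixed constant. Instead it writes $\zeta_N(\tfrac{\delta}{2}+it) = |F_N|\,e^{i\phi}\bigl(1+e^{i(\theta-2\phi)}\bigr)$ and observes that zeros on the line are exactly where $\theta(t)-2\phi(t)\in(2\mathbb Z+1)\pi$ (or $F_N=0$). The count then reduces to bounding the total variation of $\arg G - 2\arg F_N$ along the critical line. The $\arg G$ part gives the main term via Stirling; the $\arg F_N$ part is bounded by $O(U\log N)$ using the argument principle on a rectangle together with Jensen's formula on a disk of radius $\sim U$, exploiting only the polynomial bound $|F_N(s)|\ll N^{\mu|\sigma|+\nu}$. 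This avoids any mean-square control of $F_N$ on the critical line and is why the argument goes through uniformly for the whole class.
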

	In conjunction with Theorem \ref{Count}, by taking $N\leq T^{o(1)}$ and $U\geq T^b$  for any fixed positive real number $b$, Theorem \ref{Critical Zero} gives
	\begin{align}
	\liminf_{T\to \infty}\frac{N_{0,0}(T,U)}{N_{0}(T,U)}=1.
	\end{align}
	Hence, this theorem suggests that $100\%$ zeroes of $\zeta_N(s)$ lie on the critical line,  which generalizes the result for $\tilde{Z}_N(s)$ given in \cite{Montgomery-Gonek} to an approximation of general $L$-functions. 
	
	Our next result says that our original result, Theorem \ref{firstmainthm}, also works for a wider class of L-functions. 
	\begin{theorem}\label{Critical}
	Suppose that the coefficients $a_n$ are real. Then there exists some $T_0, U_0, \nu > 0$ such that for any non-zero $a \in \C $ and $t \geq T_0$,
	all $a$-values of $\zeta_N(s)$ on the critical line are simple. Moreover,
	for $U \geq U_0 N^\nu$, 
	\begin{align} N_{a,0}(T,U) = O(U \log N) \end{align}
	where the constant is independent of $a$. 
	\end{theorem}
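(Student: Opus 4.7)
The plan is to exploit the realness of $a_n$ and the functional equation to reduce the critical-line condition $\zeta_N(\delta/2+it) = a$ to two real constraints; one of these turns out to be that an $N$-term Dirichlet polynomial takes a specific value, and the count of such $t$ can then be bounded by Jensen's formula.

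First I would introduce a real smooth phase $\theta(t)$ with $G(\delta/2+it) = e^{-2i\theta(t)}$; Stirling gives $\theta'(t) = A\log t + O(1)$, so $\theta'(t) > 0$ for $t \geq T_0$. With real $a_n$ the identity $F_N(\delta/2 - it) = \overline{F_N(\delta/2+it)}$ shows that $g(t) := 2\operatorname{Re}(e^{i\theta(t)} F_N(\delta/2+it))$ is real and $\zeta_N(\delta/2+it) = e^{-i\theta(t)} g(t)$. Differentiating in $t$ gives $|\zeta_N'(\delta/2+it)|^2 = g'(t)^2 + \theta'(t)^2 g(t)^2$. At any $a$-value on the critical line with $a \neq 0$, the identity $\zeta_N(\delta/2+it_0) = a$ forces $g(t_0) = \pm|a| \neq 0$, and $\theta'(t_0) > 0$ then yields $\zeta_N'(\delta/2+it_0) \neq 0$, proving simplicity.

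For counting, splitting $\zeta_N(\delta/2+it) = a$ into real and imaginary parts simultaneously forces $\theta(t) \equiv -\arg a \pmod \pi$ and $g(t) = \pm|a|$. A direct substitution then shows that at any such $t$ the Dirichlet polynomial
\[
\psi(z) := e^{-i\arg a}\, F_N(\delta/2+iz) + e^{i\arg a}\, F_N(\delta/2-iz)
\]
satisfies $\psi(t) = |a|$. Hence $N_{a,0}(T,U)$ is bounded by the number of real zeros of $D(z) := \psi(z) - |a|$ in $(T, T+U)$. Since $D$ is entire of exponential type $\log \lambda_N = O(\log N)$, I would apply Jensen's formula on a disk of radius $R = O(U)$ containing this interval: the polynomial growth of $|a_n|$ and $\lambda_n$ combined with $|\lambda_n^{iz}| \le \lambda_n^R$ gives $\log\max_{|z-z_0|=R}|D(z)| = O(U\log N)$.

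To complete the Jensen estimate I would produce $z_0 \in (T, T+U)$ with $|D(z_0)|$ bounded below uniformly in $a$. Computing the mean of $|D(t)|^2$ over $(T,T+U)$, the diagonal contribution equals $2 \sum_{n=2}^N a_n^2/\lambda_n^\delta + (2 a_1 \cos(\arg a) - |a|)^2$, bounded below by the $a$-independent positive quantity $2 \sum_{n=2}^N a_n^2/\lambda_n^\delta$ (positive because $a_2 \neq 0$ or $a_3 \neq 0$); the off-diagonal contributions give an error of size $O(N^{O(1)}/U)$, negligible once $U \geq U_0 N^\nu$ for $\nu$ large enough. Jensen's inequality then yields $N_{a,0}(T,U) = O(U\log N)$ with implied constant independent of $a$. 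The main obstacle is precisely this uniform-in-$a$ lower bound on $|D|$: controlling the mean of $|D|^2$ from below while absorbing the averaging error is what pins down the hypothesis $U \geq U_0 N^\nu$.
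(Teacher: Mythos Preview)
Your argument is correct and follows the same overall architecture as the paper: both reduce the critical-line equation $\zeta_N(\delta/2+it)=a$ via the phase decomposition to the single real constraint $2\operatorname{proj}_{\arg a} F_N(\delta/2+it)=|a|$, and both establish simplicity from the monotonicity of the phase $\theta(t)$ (equivalently, Lemma~\ref{Monotone}).

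The counting step is where you diverge. The paper does \emph{not} apply Jensen to $D(z)=\psi(z)-|a|$ directly; instead it uses Rolle's theorem to pass to the derivative $g=\partial_t h$ (where $h$ is $\psi$ with the constant term removed), so that the number of solutions of $h=|a|-2\operatorname{proj}_\alpha a_1$ is at most one plus the number of zeros of $g$. It then lower-bounds $|g|$ at some point $c\in(T,T+U)$ by a \emph{first-moment} trick, averaging $g(it)\sin(\log\lambda_2\,t+\phi_2)$ to isolate the $n=2$ term. The virtue of differentiating first is that $g$ no longer contains $|a|$ or $a_1$, so the lower bound $|g(ic)|\ge b_2/10$ is manifestly uniform in $a$. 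Your route---mean square of $D$ and direct Jensen---is equally valid and arguably more standard; you recover uniformity in $a$ by observing that the diagonal contribution $(2a_1\cos\alpha-|a|)^2+2\sum_{n\ge 2}a_n^2\lambda_n^{-\delta}$ is always at least $2a_2^2\lambda_2^{-\delta}$, and that for $|a|$ exceeding $2\sum|a_n|\lambda_n^{-\delta/2}$ the count is trivially zero (you should say this explicitly, since otherwise both the cross terms in the mean square and the $\log(|\psi|+|a|)$ in the Jensen upper bound carry an $|a|$). With that caveat noted, both approaches yield the same $O(U\log N)$ bound with the same dependence $U\ge U_0 N^\nu$.
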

	
	As an immediate consequence of Theorem \ref{Critical} together with Theorem \ref{Count}, 
	\begin{align}
	\lim_{T\to \infty}\frac{N_{a,0}(T,U)}{N_{a}(T,U)}=0
	\end{align}
	as long as $N\leq T^{f(T)}$ for some $f(T)=o(1)$ and $U\geq T^b$ for some fixed positive real number $b$. In another words we can say $0\%$ of non-zero $a$-values of $\zeta_N(s)$ lie on the critical line, which proves the Conjecture \ref{conj:Lesterconjecture} for $\zeta_N(s)$. As a special case if one takes $\zeta_N = \Tilde{Z}_N$ then we have Theorem \ref{firstmainthm}. 
	Along with Theorem \ref{Count} and \ref{Cluster}, it suggests that, even though non-zero $a$-values of $\zeta_N(s)$ lie very close to the critical line,
	almost all of them do not lie on the critical line, contrary to the case $a=0$.
	
	Our last result is to give a region where all $a$-values lie. This result is essential for understanding the previous theorems.
	\begin{theorem}\label{Strip}
	There exists $T_0 > 0$ such that for any $a \in \C$, there exist positive numbers $\sigma_a$ and $\gamma$ such that all $a$-values of $\zeta_N(s)$ with $t \geq T_0$ lie within the region $\vt \sigma \vt < \sigma_a N^\gamma$. Moreover, if $\sigma \geq \sigma_a N^\gamma$ and $t \geq T_0$, then $\zeta_N(s) - a \in D(\vt a_1-a \vt,\vt a_1-a \vt/2)$ when $a \neq a_1$ otherwise, 
	$(\zeta_N(s)-a) \lambda_2^s \in D(a_2,\vt a_2 \vt/2)$. If $\sigma \leq - \sigma_a N^\gamma$ and $t \geq T_0$, then 
	$\vt \zeta_N(s) - a \vt > 1$.
	\end{theorem}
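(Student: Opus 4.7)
The plan is to analyze $\zeta_N(s)$ in the right half-plane $\sigma \geq \sigma_a N^\gamma$ and then transfer the picture to the left half-plane using the symmetry $\zeta_N(s) = G(s) \zeta_N(\delta - s)$. In the right half-plane $F_N(s)$ is dominated by its leading term while the reflected piece $G(s) F_N(\delta - s)$ is killed by the super-exponential decay of $|G(s)|$ coming from Stirling's formula; in the left half-plane $|G(s)|$ grows super-exponentially while $|\zeta_N(\delta - s)|$ is bounded below by the right-half-plane analysis, forcing $|\zeta_N(s)|$ to be huge.

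The first step is a uniform estimate for $|G(s)|$ as $\sigma \to \infty$ with $t \geq T_0$. Apply Stirling's formula to each $\Gamma(\alpha_i s + \beta_i)$ in $\Omega(s)$, and to each $\Gamma(\alpha_i(\delta - s) + \beta_i)$ in $\Omega(\delta - s)$ via the reflection formula; the resulting $\sin$ factor contributes at most $e^{\pi A t}$, which is more than absorbed by the phase pickup from the other $\Gamma$ factors, yielding
\begin{align*}
\log |G(s)| \leq -2A \sigma \log |s| + (2 \sigma - \delta) \log \lambda - 2 B \sigma + O(\log(\sigma + t))
\end{align*}
uniformly for $\sigma \geq 1$ and $t \geq T_0$. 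Together with $|F_N(\delta - s)| \leq N \max_n |a_n| \lambda_N^{\sigma - \delta}$ and the polynomial growth of $|a_n|$ and $\lambda_n$ in $n$, this gives
\begin{align*}
|G(s) F_N(\delta - s)| \leq \exp\bigl(-2A \sigma \log |s| + c \sigma \log N + O(\sigma)\bigr)
\end{align*}
for some $c > 0$ depending only on the fixed data of $F$. Since $|s| \geq \sigma$, the right-hand side is smaller than any prescribed $\eta > 0$ once $\sigma \geq \sigma_a N^\gamma$ with $\gamma > c/(2A)$ and $\sigma_a$ depending on $\eta$.

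Now split on whether $a = a_1$. If $a \neq a_1$, choose $\sigma_a$ large enough that both $|G(s) F_N(\delta - s)| < |a_1 - a|/4$ and $|F_N(s) - a_1| \leq \sum_{n \geq 2} |a_n| \lambda_n^{-\sigma} < |a_1 - a|/4$; the triangle inequality then places $\zeta_N(s) - a$ within distance $|a_1 - a|/2$ of $a_1 - a$, as claimed. If $a = a_1$, multiply by $\lambda_2^s$: the new leading part $\lambda_2^s (F_N(s) - a) = a_2 + \sum_{n \geq 3} a_n (\lambda_2/\lambda_n)^s$ has a tail that vanishes as $\sigma \to \infty$ (since $\lambda_2 < \lambda_n$), and the reflected term $\lambda_2^s G(s) F_N(\delta - s)$ picks up only a $\lambda_2^\sigma$ factor still swallowed by the $|s|^{-2A\sigma}$ decay, placing $(\zeta_N(s) - a) \lambda_2^s$ in a disk of radius $|a_2|/2$ about $a_2$. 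For $\sigma \leq -\sigma_a N^\gamma$ the symmetry $\zeta_N(s) = G(s) \zeta_N(\delta - s)$ reduces to the previous case: $\operatorname{Re}(\delta - s)$ is large positive, so $|\zeta_N(\delta - s)|$ is bounded below by a positive constant when $a_1 \neq 0$ and by a positive multiple of $\lambda_2^{\sigma - \delta}$ when $a_1 = 0$; meanwhile $|G(s)| = 1/|G(\delta - s)|$ grows super-exponentially by the same Stirling analysis, overwhelming any $\lambda_2^{\sigma - \delta}$ loss, so $|\zeta_N(s) - a| > 1$ after taking $\sigma_a$ large.

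The main technical obstacle is the uniform Stirling estimate in the second paragraph: one must track both the $\sigma$- and the $t$-dependence carefully, ensuring that the dominant $-2A \sigma \log |s|$ term beats $c \sigma \log N$ by a fixed positive power of $N$ so that $\gamma$ can be chosen independently of $a$, and that $T_0$ can be chosen independently of $N$ so that the bound is valid throughout the prescribed range. Everything else is bookkeeping.
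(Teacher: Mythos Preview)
Your plan is correct and follows essentially the same route as the paper. The paper packages the Stirling analysis into Lemma~\ref{Chi Bound} and Lemma~\ref{Inequality} rather than writing out the explicit $-2A\sigma\log|s|$ bound, and on the left it argues directly via the triangle inequality $|G(s)F_N(\delta-s)| > |F_N(s)| + |a| + 1$ rather than invoking the functional equation $\zeta_N(s) = G(s)\zeta_N(\delta-s)$ together with the right-half-plane lower bound on $|\zeta_N(\delta-s)|$; but these are the same computation unwound, and the case split on $a_1 = 0$ versus $a_1 \neq 0$ (respectively $a = a_1$ versus $a \neq a_1$) is handled identically.
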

	
	Theorem \ref{Strip} asserts that for sufficiently large imaginary part, all $a$-values of $\zeta_N(s)$
	lie in some critical strip, and this critical strip is bounded by a polynomial in $N$. 
	Moreover, the theorem places some constrains of the behavior of $\zeta_N(s)$ outside the critical strip. 
	On the right, $\zeta_N(s)$ is near $a_1$. 
	On the left, $\zeta_N(s)$ is far from $a$. 
	
	\begin{remark}
	Theorem \ref{Count}, \ref{Cluster} and \ref{Strip}, it
	is possible to take any $\gamma$ such that $2A \gamma > \mu$ if $\mu$ 
	satisfies $\vt F_N(s) \vt \ll N^{\mu \vt \sigma \vt + \nu}. $ For example, if $F$ is the
	Riemann Zeta function, $\mu = 1, A = \hf$, and any $\gamma > 1$ works. 
	\end{remark}
	\begin{remark}
	For Theorem \ref{Critical}, any $\nu$ such that
	$\sum_{n=1}^N a_n \lambda_n^{-\hf[\delta]} \log \lambda_n \ll N^\nu$ works. 
	For example, if $F$ is the
	Riemann Zeta function, any $\nu > \hf$ works. 
	\end{remark}
	\section{Preliminaries Lemmas}
	
	We first reference the Littlewood lemma \cite[p.~10]{Gonek_notes}, bearing resemblance to the argument principle, that counts the expected real part of zeroes of a holomorphic function within a rectangle.
	\begin{lemma}
	Let $f(s)$ be a holomorphic functions in and upon the boundary of a rectangle $R=[\sigma_l,\sigma_r]\times [0,T]$, and non-zero on the three
	edges $t = 0, T$ and $\sigma = \sigma_r$. Then, by defining $\arg(f(s))$ continuously
	along the three edges, 
	\begin{align} 2
		\pi \sum_{\rho \in R} \left(\Re(\rho) - \sigma_l\right) &= \int_0^T \log \vt f(\sigma_l + it) \vt 
		- \log \vt f(\sigma_r+it) \vt \diff t \\
		&\quad+ 
		\int_{\sigma_l}^{\sigma_r} \arg (f(\sigma+iT)) - \arg(f(\sigma)) \diff \sigma 
	\end{align} 
	where the sum is taken over all zeros $\rho$ of $f$ in $R$.
	\end{lemma}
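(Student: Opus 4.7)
The plan is the classical Littlewood contour argument: apply Cauchy's theorem to $\log f(s)$ on a slit version of the rectangle $R$, where horizontal cuts render $\log f$ single-valued so that the obstruction from the zeros of $f$ produces the sum on the left-hand side. Enumerate the zeros $\rho_1,\dots,\rho_n$ of $f$ in $R$ (with multiplicity) and remove a horizontal slit from each $\rho_j$ extending leftward to the edge $\sigma=\sigma_l$. Since $f$ is non-zero on the top, bottom, and right edges by hypothesis, no slit touches those edges, so a continuous holomorphic branch of $\log f$ exists on the slit region. After cutting out small circles of radius $\varepsilon$ around each $\rho_j$ (whose contribution is $O(\varepsilon|\log\varepsilon|)\to 0$ as $\varepsilon\to 0$), Cauchy's theorem gives that the integral of $\log f\,\diff s$ over the full boundary of the slit region vanishes. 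This boundary is $\partial R$ (traversed counterclockwise), together with a detour along each slit: rightward along the top side from $\sigma_l+i\,\Im\rho_j$ to $\rho_j$, then leftward along the bottom side back to the left edge.

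Next, across the slit at $\rho_j$ of multiplicity $m_j$, the branch of $\arg(s-\rho_j)$ jumps by $2\pi$, so $\log f|_{\text{top}}-\log f|_{\text{bot}} = 2\pi i m_j$. The two sides of the slit thus combine to
\begin{align*}
\int_{\sigma_l}^{\Re \rho_j}(\log f|_{\text{top}} - \log f|_{\text{bot}})\,\diff\sigma = 2\pi i m_j(\Re\rho_j - \sigma_l),
\end{align*}
and summing over the slits gives total slit contribution $2\pi i \sum_{\rho\in R}(\Re\rho - \sigma_l)$. Parametrizing the four sides of $\partial R$ directly and splitting $\log f = \log|f| + i\arg f$, the imaginary part of $\oint_{\partial R}\log f\,\diff s$ works out to
\begin{align*}
\int_0^T\bigl(\log|f(\sigma_r+it)| - \log|f(\sigma_l+it)|\bigr)\,\diff t + \int_{\sigma_l}^{\sigma_r}\bigl(\arg f(\sigma) - \arg f(\sigma+iT)\bigr)\,\diff\sigma.
\end{align*}
The Cauchy identity $\oint_{\partial R} + (\text{slit contributions}) = 0$ forces this expression to equal $-2\pi\sum_\rho(\Re\rho - \sigma_l)$; negating and rearranging yields the stated identity.

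The continuous definition of $\arg f$ along the three specified edges is compatible with the single-valued branch of $\log f$ on the slit region precisely because those edges are disjoint from the slits, so the boundary values match without ambiguity. The main book-keeping hazard is orienting the slit detour correctly so that the sign of the jump in $\arg f$ produces the correct sign on the right-hand side; all remaining analytic content reduces to Cauchy's theorem and the integrability of $\log|s-\rho|$ at $\rho$.
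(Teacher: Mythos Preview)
The paper does not prove this lemma; it simply cites it as the classical Littlewood lemma from \cite[p.~10]{Gonek_notes}. Your argument is the standard contour proof and is correct: the one subtlety worth noting is that the slit-region branch of $\log f$ is discontinuous along the left edge $\sigma=\sigma_l$ (it jumps by $2\pi i m_j$ across each slit), so ``$\oint_{\partial R}\log f\,ds$'' is not literally the integral of a continuous function there---but since only $\log|f|$, which is single-valued, contributes to the imaginary part of the left-edge integral, your computation of $\Im\oint_{\partial R}\log f\,ds$ is unaffected and the identity follows as you wrote.
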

	
	Next, we recall the following form of Stirling's approximation of the Gamma function from \cite[\S 6.1]{abramowitzstegun}, which is essential
	in characterizing the behavior of the $G(s)$ term defined in \eqref{def:Del}.  
	
	\begin{lemma}\label{Stirling}
	Fix any $\epsilon > 0$, for all $s$ with $\vt \arg (s) \vt < \pi - \epsilon$ and $s \to \infty$,
	\begin{align} \Gamma(s) = \sqrt{\frac{2\pi}{s}} \left( \frac{s}{e} \right)^s \left( 1 + O(\vt s \vt^{-1})\right).  \end{align} 
	\end{lemma}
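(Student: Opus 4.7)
My plan is to deduce the stated expansion via the saddle-point (Laplace) method applied to the Euler integral for $\Gamma(s)$, then extend from the right half-plane to the full sector by the reflection formula.

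For $\Re s > 0$, starting from $\Gamma(s) = \int_0^\infty t^{s-1} e^{-t}\,dt$ and substituting $t = s u$ I would obtain
\[
\Gamma(s) = s^{s} \int_0^\infty u^{-1} \exp\bigl(s(\log u - u)\bigr)\,du.
\]
The phase $\phi(u) = \log u - u$ has its unique critical point at $u = 1$, with $\phi(1) = -1$ and $\phi''(1) = -1$. For complex $s$ with $|\arg s| \leq \pi/2 - \epsilon$, I would deform the contour to a steepest-descent path through $u = 1$ and apply Laplace's method: the leading Gaussian contributes $\sqrt{2\pi/s}\,e^{-s}$, and expanding $\phi$ to cubic order yields a correction of size $O(1/|s|)$. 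This gives the desired asymptotic throughout the right half-plane.

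To reach the remaining portion of the sector, $\pi/2 \leq |\arg s| < \pi - \epsilon$, I would invoke the reflection identity $\Gamma(s)\Gamma(1-s) = \pi / \sin(\pi s)$. When $\arg s \in (\pi/2, \pi - \epsilon)$, the point $1-s$ has positive real part, so the first step applies to $\Gamma(1-s)$. The sector condition keeps $s$ bounded away from the zeros of $\sin(\pi s)$, so the factor $1/\sin(\pi s)$ is well-controlled; substituting the Stirling expansion for $\Gamma(1-s)$ and working with the principal branch of $\log s$ throughout the sector recovers the same leading form $\sqrt{2\pi/s}(s/e)^s$ for $\Gamma(s)$. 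A conjugation argument handles $\arg s \in (-\pi + \epsilon, -\pi/2)$.

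The main obstacle will be keeping the error term $O(|s|^{-1})$ uniform as $s$ crosses the imaginary axis, where the right-half-plane Laplace argument must be matched with the left-half-plane reflection argument; both the branch of $\log s$ and the implicit constants need to agree. A cleaner route that avoids this gluing altogether is to work from Binet's second formula,
\[
\log \Gamma(s) = \bigl(s - \tfrac{1}{2}\bigr)\log s - s + \tfrac{1}{2}\log(2\pi) + 2\int_0^\infty \frac{\arctan(t/s)}{e^{2\pi t}-1}\,dt,
\]
valid for $\Re s > 0$, in which the integral is immediately $O(1/|s|)$ uniformly on each subsector $|\arg s| \leq \pi/2 - \epsilon$ via $\arctan(t/s) = O(t/|s|)$; the sector $|\arg s| < \pi - \epsilon$ is then reached by analytic continuation and uniqueness of asymptotic expansions.
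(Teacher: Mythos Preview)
The paper does not prove this lemma at all: it is stated as a recollection of a standard reference result, with the citation to Abramowitz--Stegun \S6.1 serving in lieu of proof. So there is no argument in the paper to compare against.

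Your outline is a perfectly standard way to derive Stirling's formula and is essentially correct as a plan. A few remarks. First, in the saddle-point step the substitution $t=su$ for complex $s$ sends the positive real $t$-axis to the ray $e^{-i\arg s}\mathbb{R}_{+}$ in the $u$-plane, so the integral you write is initially over that ray, not over $(0,\infty)$; the contour deformation back to the real axis (and then to the steepest-descent path) is where the restriction $|\arg s|\le \pi/2-\epsilon$ is actually used. Second, in the reflection step the combination of the exponential growth of $1/\sin(\pi s)$ with the Stirling asymptotic for $\Gamma(1-s)$ has to be carried out carefully with consistent branches of $\log s$ and $\log(1-s)$; you flag this yourself, and it is indeed the only place where a careless treatment can go wrong. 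Your fallback via Binet's second formula is the cleanest route for the right half-plane, but note that ``analytic continuation and uniqueness of asymptotic expansions'' is not by itself a mechanism for extending to $|\arg s|<\pi-\epsilon$; one still needs either the reflection formula or the functional equation $\Gamma(s+1)=s\Gamma(s)$ applied finitely many times to push into the left half-plane, and the uniformity of the $O(|s|^{-1})$ has to be tracked through that step. None of this is a gap so much as a reminder that the sector extension requires an explicit device, not just an appeal to analyticity.
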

	We also need the following form of asymptotic of Digamma function \cite[\S 6.3]{abramowitzstegun}.
	\begin{lemma}\label{digamma}
	Fix any $\epsilon > 0$,  for all $s$ with $\vt \arg (s) \vt < \pi - \epsilon$ and $s \to \infty$,
	\begin{align}
		\psi(s)=\frac{\Gamma'(s)}{\Gamma(s)}\sim \ln s-\frac{1}{2s}-\frac{1}{12s^2}+\cdots.
	\end{align}
	\end{lemma}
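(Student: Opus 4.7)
The plan is to derive the digamma asymptotic by differentiating Stirling's expansion for $\log\Gamma(s)$ and controlling the tail uniformly in the sector $|\arg s|<\pi-\epsilon$. The cleanest way to manage the uniformity, rather than differentiating an asymptotic series term-by-term (which is not automatic), is to start from Binet's second integral formula
\begin{align}
\log\Gamma(s) = \left(s-\tfrac{1}{2}\right)\log s - s + \tfrac{1}{2}\log(2\pi) + 2\int_0^\infty \frac{\arctan(t/s)}{e^{2\pi t}-1}\,dt,
\end{align}
valid for $\Re s>0$ and extendable to $|\arg s|<\pi-\epsilon$ by analytic continuation. Differentiating under the integral (justified by dominated convergence on the given sector, since the integrand and its $s$-derivative decay exponentially in $t$) yields
\begin{align}
\psi(s) = \log s - \frac{1}{2s} - 2\int_0^\infty \frac{t}{(s^2+t^2)(e^{2\pi t}-1)}\,dt.
\end{align}

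Next, I would expand the rational factor as a geometric series,
\begin{align}
\frac{t}{s^2+t^2} = \sum_{k=1}^{K}\frac{(-1)^{k-1}t^{2k-1}}{s^{2k}} + \frac{(-1)^{K}t^{2K+1}}{s^{2K}(s^2+t^2)},
\end{align}
and integrate termwise. The classical identity $\int_0^\infty \frac{t^{2k-1}}{e^{2\pi t}-1}\,dt = \frac{(2k-1)!\,|B_{2k}|}{(2\pi)^{2k}(2k-1)!\cdot\text{const}}$, which when simplified gives coefficients $B_{2k}/(2k)$, produces exactly the asserted series with the $-\frac{1}{12s^2}$ coming from $B_2=\frac{1}{6}$.

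The main obstacle is bounding the remainder integral uniformly in $|\arg s|<\pi-\epsilon$. On this sector, one has $|s^2+t^2|\geq c_\epsilon(|s|^2+t^2)$ for some constant $c_\epsilon>0$ depending only on $\epsilon$ (since $s^2$ avoids the negative real axis by a sector determined by $\epsilon$). Using this lower bound, the remainder is dominated by $|s|^{-2K}\int_0^\infty \frac{t^{2K+1}}{(|s|^2+t^2)(e^{2\pi t}-1)}\,dt\ll_K |s|^{-2K}$, which gives the desired asymptotic expansion. I would wrap up by recording that the implicit constants depend only on $K$ and $\epsilon$, which is all that is needed for the applications later in the paper.
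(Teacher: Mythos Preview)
The paper does not actually prove this lemma: it is stated with a citation to Abramowitz--Stegun \S6.3 and no argument is given. So you are supplying strictly more than the paper does, and your overall strategy via Binet's second formula is the classical one.

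That said, there is a genuine gap in your uniformity step. The inequality $|s^2+t^2|\geq c_\epsilon(|s|^2+t^2)$ that you invoke for $t>0$ real and $s$ in the sector $|\arg s|<\pi-\epsilon$ is false once $|\arg s|$ reaches $\pi/2$: writing $s=re^{i\theta}$ one has $|s^2+t^2|^2=r^4+2r^2t^2\cos 2\theta+t^4$, and at $\theta=\pi/2$ this equals $(r^2-t^2)^2$, which vanishes at $t=r$. In fact the Binet integral representation you wrote down is only valid for $\Re s>0$; the phrase ``extendable by analytic continuation'' applies to $\log\Gamma$, not to the integral formula itself, and for $s$ on the imaginary axis the integrand $t/((s^2+t^2)(e^{2\pi t}-1))$ has a pole on the path of integration. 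Since the paper's only application of this lemma (in Lemma~\ref{Monotone}) is precisely at fixed $\sigma$ with $t\to\infty$, i.e.\ $\arg s\to\pi/2$, this is not a harmless restriction.

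The fix is standard: either rotate the contour of integration in Binet's formula to a ray $e^{i\phi}[0,\infty)$ chosen so that $s^2+u^2$ stays bounded away from zero along the new path (this gives the representation and the remainder bound uniformly on any sector $|\arg s|<\pi-\epsilon$), or prove the expansion first on $|\arg s|<\pi/2-\epsilon$ where your bound is valid and then extend to the wider sector using the recurrence $\psi(s+n)=\psi(s)+\sum_{j=0}^{n-1}(s+j)^{-1}$ with $n$ large enough to push $s+n$ into the right half-plane.
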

	\begin{lemma}\label{Chi Bound}
	For sufficiently large $\sigma$ and sufficiently large $t$,
	\begin{align} \left\vt \frac{1}{G(s)} \right\vt \gg \lambda^{2\sigma-\delta} 
		\prod_{i=1}^k e^{\alpha_i t} \left( \frac{\vt \alpha_i s \vt}{2e} \right)^{ 
			\alpha_i (2\sigma - \delta)}.   \end{align} 
			\end{lemma}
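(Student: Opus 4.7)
The plan is a direct computation using Stirling's approximation applied to each of the Gamma factors appearing in $1/G(s)$. From the definition, I have
\[
\left|\frac{1}{G(s)}\right| = \lambda^{\delta - 2\sigma}\prod_{i=1}^k \frac{\left|\Gamma(\alpha_i s + \beta_i)\right|}{\left|\Gamma(\alpha_i(\delta - s) + \beta_i)\right|},
\]
so the problem reduces to estimating each Gamma ratio. For $s = \sigma + it$ with $\sigma$ and $t$ both sufficiently large (and in particular with $t$ not negligible compared to $\sigma - \delta$), both $\alpha_i s + \beta_i$ and $\alpha_i(\delta - s) + \beta_i$ have arguments strictly bounded away from $\pm \pi$, so Lemma \ref{Stirling} applies.

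In the form $|\Gamma(z)| \sim \sqrt{2\pi/|z|}\,(|z|/e)^{\Re z} e^{-\Im z \cdot \arg z}$, I would plug in $z_1 = \alpha_i s + \beta_i$, whose real part is $\alpha_i \sigma + \beta_i$, imaginary part $\alpha_i t$, modulus $\sim \alpha_i |s|$, and argument $\to \arg s$; and $z_2 = \alpha_i(\delta - s) + \beta_i$, whose real part is $\alpha_i(\delta - \sigma) + \beta_i$, imaginary part $-\alpha_i t$, modulus $\sim \alpha_i|s|$, and argument $\to \arg s - \pi$. Taking the ratio, the square-root prefactors cancel up to a bounded factor, the $\beta_i$-shifts and the $(1+O(|z|^{-1}))$ Stirling errors contribute only multiplicative constants, and I obtain
\[
\frac{\left|\Gamma(\alpha_i s + \beta_i)\right|}{\left|\Gamma(\alpha_i(\delta - s) + \beta_i)\right|} \sim \left(\frac{\alpha_i|s|}{e}\right)^{\alpha_i(2\sigma - \delta)} e^{\alpha_i t(\pi - 2\arg s)}.
\]

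Finally, I would combine across $i$, rewrite $(\alpha_i|s|/e)^{\alpha_i(2\sigma - \delta)} = 2^{\alpha_i(2\sigma - \delta)}(|\alpha_i s|/(2e))^{\alpha_i(2\sigma - \delta)}$ to introduce the $2e$ in the denominator, and use that in the regime where $\arg s$ is small enough for $\pi - 2 \arg s \geq 1$ (which is guaranteed once $\sigma$ is sufficiently large relative to $t$) the exponential factor satisfies $e^{\alpha_i t (\pi - 2 \arg s)} \geq e^{\alpha_i t}$. The remaining multiplicative factors involving powers of $2$ and $\lambda$ are then absorbed into the implicit constant in $\gg$.

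The main obstacle is the bookkeeping. Stirling's approximation is only valid provided the argument of the Gamma function stays bounded away from $\pm \pi$, which forces a careful quantification of how $t$ must grow along with $\sigma$ so that $\alpha_i(\delta - s) + \beta_i$ does not approach the negative real axis. Once that quantification is pinned down, all remaining errors from the Stirling expansion, the $\beta_i$-shifts, and the square-root prefactors amount to bounded multiplicative constants that are swallowed by $\gg$.
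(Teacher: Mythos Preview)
Your direct-Stirling computation is sound as far as it goes, and the intermediate expression $(\alpha_i|s|/e)^{\alpha_i(2\sigma-\delta)}e^{\alpha_i t(\pi-2\arg s)}$ is correct. The paper, however, takes a different route: before invoking Stirling it applies the reflection formula $\Gamma(z)\Gamma(1-z)=\pi/\sin(\pi z)$ to rewrite each ratio as
\[
\frac{\Gamma(\alpha s+\beta)}{\Gamma(\alpha(\delta-s)+\beta)} \;=\; \Gamma(\alpha s+\beta)\,\Gamma(\alpha s+1-\alpha\delta-\beta)\,\frac{\sin\bigl(\pi(\alpha s+1-\alpha\delta-\beta)\bigr)}{\pi},
\]
and only then applies Stirling to the two Gamma factors. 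The point is that both Gamma arguments now have large \emph{positive} real part, so Stirling's hypothesis $|\arg z|<\pi-\epsilon$ holds automatically for all large $\sigma,t$, with no constraint tying $t$ to $\sigma$. The factor $e^{\alpha_i t}$ is then read off directly from the sine, via $|\sin(x+iy)|\geq|\sinh y|$, giving $|\sin(\pi\alpha s+\cdots)|\geq\sinh(\pi\alpha t)\geq e^{\alpha t}/4$.

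Your approach, by contrast, carries two competing constraints. For Stirling to apply to $\Gamma(\alpha_i(\delta-s)+\beta_i)$, whose argument sits in the third quadrant, you need $t$ not too small relative to $\sigma$ (so that the argument stays away from $-\pi$). But your closing inequality $\pi-2\arg s\geq 1$ needs $t$ not too \emph{large} relative to $\sigma$. Together these confine your argument to a cone $c_1\sigma\leq t\leq c_2\sigma$, whereas the lemma is meant to hold for all sufficiently large $\sigma$ and $t$ independently---and in the later applications one typically has $t$ far larger than $\sigma$. The reflection-formula device is precisely what lets the paper decouple $\sigma$ from $t$.
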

			
			\begin{proof}
	From \eqref{def:Del} it is clear that $G(s)$ is a product of ratios of Gamma functions. Consider a single term
	of $1/G(s)$ with
	\begin{align}
		\frac{\Gamma(\alpha s + \beta)}{\Gamma(\alpha (\delta-s) + \beta)} 
		&= \Gamma(\alpha s+\beta) \Gamma(\alpha s + 1  - \alpha \delta - \beta) \frac{\sin(\pi(\alpha s + 1 - \alpha \delta - \beta))}
		{\pi},  
	\end{align}
	where we have used the reflection formula for the Gamma function
	\begin{align}
		\Gamma(z)\Gamma(1-z)=\frac{\pi}{\sin(\pi z)}
	\end{align}
	for $z\notin \mathbb{Z}$. Now for any $x,y\in \mathbb{R}$ one has
	\begin{align} 
		\vt \sin(x+iy) \vt^2 = \sin^2 x \cosh^2 y + \cos^2 x \sinh^2 y \geq \sinh^2 y . 
	\end{align}
	Hence
		\begin{align} 
			\vt \sin (\pi(\alpha s + 1 - \alpha \delta - \beta)) \vt \geq \sinh (\pi\alpha t) = \frac{e^{\pi\alpha t} - e^{-\pi \alpha t}}{2} \geq
			\frac{e^{\pi\alpha t}}{4} \geq \frac{e^{\alpha t}}{4}
		\end{align}
	when $2 \pi\alpha t \geq \log 2$. We take $t$ sufficiently large so that this holds.
	Additionally, for $\sigma$ large enough 
	and by Stirling's formula \eqref{Stirling} we obtain
	\begin{align}
		\frac{\Gamma(\alpha s + \beta)}{\Gamma(\alpha (\delta-s) + \beta)} 
		\gg& \ e^{\alpha t} \left( \frac{\vt \alpha s \vt}{2e} \right)^{\alpha \sigma + \beta - \hf}
		\left( \frac{\vt \alpha s \vt}{2e} \right)^{\alpha \sigma + 1 - \alpha \delta - \beta - \hf} \\
		=& \ e^{\alpha t} \left( \frac{\vt \alpha s \vt}{2e} \right)^{\alpha (2\sigma - \delta)}.
	\end{align}
	Taking the product over all terms of $1/G(s)$ yields the desired result of the lemma. 
	\end{proof} 
	As a consequence of Lemma \ref{Chi Bound} we have following bound for $G(s)$.
	\begin{lemma}\label{Inequality}
	For any $\epsilon > 0$, $\mu,\nu \in \R$, $c \geq 1$, 
	and sufficiently large $t$, there exists $\sigma_a,\gamma > 0$ such that $\sigma \geq \sigma_a N^{\gamma}$ implies 
	\begin{align} \vt G(s) \vt N^{\mu \sigma + \nu} c^{\sigma} < \epsilon. \end{align}
	\end{lemma}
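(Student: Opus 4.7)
The plan is to deduce this from Lemma \ref{Chi Bound} by taking logarithms and showing that the $-2A\sigma\log\sigma$ term produced by Stirling's asymptotic dominates every other contribution once $\sigma$ grows at least polynomially in $N$. Equivalently, it suffices to prove
\begin{align}
\log|G(s)| + (\mu\sigma+\nu)\log N + \sigma\log c < \log\epsilon.
\end{align}

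First, I would apply Lemma \ref{Chi Bound} (for $t$ and $\sigma$ sufficiently large, which will hold automatically once $\sigma\geq\sigma_a N^\gamma$ with $\sigma_a$ big enough) to obtain an upper bound
\begin{align}
\log|G(s)| \leq -(2\sigma-\delta)\log\lambda - At - \sum_{i=1}^k \alpha_i(2\sigma-\delta)\log\!\left(\frac{|\alpha_i s|}{2e}\right) + O(1).
\end{align}
Since $|\alpha_i s|\geq \alpha_i\sigma$, for $\sigma$ beyond a fixed constant (depending only on the $\alpha_i$) each factor $\log(|\alpha_i s|/(2e))$ is at least, say, $\tfrac{1}{2}\log\sigma$. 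Collecting terms, this gives
\begin{align}
\log|G(s)| \leq -2A\sigma\log\sigma + C_1\sigma + C_2
\end{align}
for absolute constants $C_1,C_2$ depending on $\lambda,\delta,A,B$ and the $\alpha_i$'s (one may as well discard the negative $-At$ contribution since $t$ is just assumed sufficiently large).

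Substituting this into the desired inequality, it is enough to arrange
\begin{align}
2A\sigma\log\sigma > \mu\sigma\log N + \sigma\log c + \nu\log N + |\log\epsilon| + C_1\sigma + C_2.
\end{align}
Dividing by $\sigma$, the essential requirement reduces to
\begin{align}
2A\log\sigma > \mu\log N + \log c + O(1),
\end{align}
where the $O(1)$ absorbs $C_1$ together with the harmless $\nu\log N/\sigma$ and $|\log\epsilon|/\sigma$ contributions once $\sigma$ is large. This is achieved by picking any $\gamma > \max(\mu/(2A),\, 0)$ (for example $\gamma = |\mu|/(2A)+1$) and then taking $\sigma_a$ large enough — depending on $\epsilon,\mu,\nu,c$ as well as on the fixed parameters $\lambda,\delta,\alpha_i,\beta_i$ — so that $\sigma\geq\sigma_a N^\gamma$ forces both $\log\sigma\geq \gamma\log N + \log\sigma_a$ and the leading $-2A\sigma\log\sigma$ term to swamp everything else.

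The only real subtlety is bookkeeping: $\mu$ may be negative (in which case the $\sigma\log N$ term already helps and any positive $\gamma$ suffices), the $c^\sigma$ factor introduces a linear-in-$\sigma$ cost that is absorbed into the $\log\sigma$ growth, and one must verify that the threshold $\sigma_a N^\gamma$ also lies above the region of validity of Lemma \ref{Chi Bound}. None of these are obstacles — they are handled by increasing $\sigma_a$. There is no genuinely hard step; the lemma is essentially a quantitative restatement of the fact that $|G(s)|$ decays faster than any polynomial in $N$ and any exponential $c^\sigma$ once $\sigma\gg \log N$.
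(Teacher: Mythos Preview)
Your proposal is correct and follows essentially the same approach as the paper: apply Lemma~\ref{Chi Bound}, take logarithms, use $\sigma\geq\sigma_a N^\gamma\Rightarrow\log\sigma\geq\gamma\log N+\log\sigma_a$, and choose $2A\gamma>\mu$ with $\sigma_a$ large enough. The only cosmetic difference is that the paper splits the logarithmic inequality explicitly into an $N^\gamma$-part and a $\sigma_a$-part rather than absorbing lower-order terms into constants $C_1,C_2$, but the content is identical.
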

	
	\begin{proof}
	Take $\sigma$ and $t$ large enough to satisfy lemma \ref{Chi Bound}. We need
	\begin{align} \lambda^{2\sigma} c^{-\sigma} \prod_{i=1}^k \left( \frac{\alpha_i \sigma}{2e} \right)^{\alpha_i (2\sigma - \delta)} 
		\gg N^{\mu \sigma + \nu}. \end{align}
	Taking the logarithm of both sides, we need 
	\begin{align} 2 \sigma \log \lambda - \sigma \log c + \sum_{i=1}^k \alpha_i (2\sigma - \delta) \log \frac{\alpha_i \sigma}{2e}
		\geq (\mu \sigma + \nu ) \log N + C \end{align}
	for some $C$. Let $\sigma \geq \sigma_a N^\gamma$ and $A = \sum_{i=1}^k \alpha_i$. It 
	suffices to have
	\begin{align} A (2\sigma-\delta) \log N^\gamma \geq (\mu \sigma + \nu) \log N
	\end{align} and  \
	\begin{align} 2 \sigma \log \lambda - \sigma \log c + A(2\sigma-\delta)
		\log \frac{\max \alpha_i \sigma_a}{2e} \geq C. \end{align}
	This is possible by having $2A \gamma > \mu$ and letting $\sigma_a$ be large enough. 
	\end{proof}
	
	The following bounds of $F_N(s)$ are necessary for the proof of theorems. 
	\begin{lemma}\label{Fn bound}
	Let $F_N(s)$ be defined in \eqref{def:Del}. Then there exist positive constant $\mu$ and $\nu$ such that
	\begin{align} \vt F_N(\delta-s) \vt \ll N^{\mu \vt \sigma \vt + \nu}\label{fbdd}    \end{align}
	holds for all $\sigma$ and 
	\begin{align}  
		\vt F_N(s) - a_1 \vt \ll \lambda_2^{-\sigma}\label{a1bdd}  
	\end{align}
	for sufficiently large $\sigma$ independent of the choice of $N$. 
	\end{lemma}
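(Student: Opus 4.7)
The plan is to prove the two inequalities separately by elementary bounds on the partial Dirichlet sum, exploiting the polynomial growth of $|a_n|$ and $\lambda_n$ for the first and the absolute convergence of $F(s)$ in some half-plane for the second.

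For \eqref{fbdd}, write $F_N(\delta-s) = \sum_{n=1}^{N} a_n \lambda_n^{s-\delta}$ so that $|F_N(\delta-s)| \leq \sum_{n=1}^N |a_n|\, \lambda_n^{\sigma - \delta}$. By the standing assumption, fix constants $c,d > 0$ with $|a_n| \ll n^c$ and $\lambda_n \ll n^d$ (and $\lambda_n \geq 1$). If $\sigma \geq \delta$, then $\lambda_n^{\sigma-\delta} \leq \lambda_N^{\sigma-\delta} \ll N^{d(\sigma-\delta)}$, so the sum is $\ll N^{c+1+d(\sigma - \delta)}$. If $\sigma < \delta$, then $\lambda_n^{\sigma - \delta} \leq 1$, and the sum is $\ll N^{c+1}$. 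Both cases are absorbed into a bound of the form $N^{\mu|\sigma| + \nu}$ for suitable positive $\mu,\nu$ independent of $t$ (since the imaginary part contributes only through $|\lambda_n^{s-\delta}| = \lambda_n^{\sigma-\delta}$).

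For \eqref{a1bdd}, separate the constant term to write
\begin{align}
F_N(s) - a_1 = \sum_{n=2}^{N} \frac{a_n}{\lambda_n^s} = \lambda_2^{-s} \left( a_2 + \sum_{n=3}^N a_n \left(\frac{\lambda_2}{\lambda_n}\right)^{s} \right).
\end{align}
Since the defining series for $F(s)$ is absolutely convergent in some half-plane $\sigma \geq \sigma_0$, the tail $\sum_{n=3}^\infty |a_n| \lambda_n^{-\sigma_0}$ converges. For $\sigma \geq \sigma_0$, use $\lambda_n \geq \lambda_3 > \lambda_2$ for $n \geq 3$ to write $(\lambda_2/\lambda_n)^\sigma = (\lambda_2/\lambda_n)^{\sigma_0} \cdot (\lambda_2/\lambda_n)^{\sigma - \sigma_0}$; the second factor is $\leq 1$ because $\lambda_2/\lambda_n < 1$ and $\sigma - \sigma_0 \geq 0$. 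Therefore
\begin{align}
\left| \sum_{n=3}^N a_n \left(\frac{\lambda_2}{\lambda_n}\right)^{s} \right| \leq \lambda_2^{\sigma_0}\sum_{n=3}^{\infty} \frac{|a_n|}{\lambda_n^{\sigma_0}} \ll 1
\end{align}
uniformly in $N$ and in $\sigma \geq \sigma_0$. Taking absolute values gives $|F_N(s) - a_1| \ll \lambda_2^{-\sigma}$ as desired.

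There is no real obstacle; the only subtle point is ensuring uniformity in $N$ for \eqref{a1bdd}, which is precisely why one factors out $\lambda_2^{-s}$ and reduces the tail sum to the absolutely convergent tail of the full Dirichlet series $F(s)$. The first bound is a pure counting estimate whose only content is tracking the dependence on $N$ carefully enough to extract a polynomial-in-$N$ bound of the claimed shape $N^{\mu|\sigma|+\nu}$.
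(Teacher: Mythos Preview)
Your proof is correct and follows essentially the same approach as the paper. The paper dismisses \eqref{fbdd} as trivial from the polynomial growth of $a_n$ and $\lambda_n$, and for \eqref{a1bdd} it writes the one-line estimate $|F_N(s)-a_1|\leq \lambda_2^{-\sigma+\sigma_0}\sum_{n\geq 2}|a_n|\lambda_n^{-\sigma_0}$, which is exactly your factoring-out-$\lambda_2^{-s}$ argument written more tersely.
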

	
	\begin{proof}
	The first inequality holds trivially since $a_n$ and $\lambda_n$ are bounded by a polynomial in $N$.  
	The second inequality holds for all $\sigma \geq \sigma_0$ if $\sigma_0$ is in the abscissa of convergence of $F$,
	since $\vt F_N(s) - a_1 \vt \leq \vt F_N(\sigma) - a_1 \vt \leq \lambda_2^{-\sigma + \sigma_0} F_N(\sigma_0)$. 
	\end{proof}
	
	The following lemma will be essential in later part of this article. 
	
	\begin{lemma}\label{projlemma}
	Let $\alpha \in [0,2\pi)$ and $z \in \C$. Then, there exists unique $x,y \in \R$ such that 
	$z = xe^{i\alpha} + ye^{i(\alpha+\pi/2)}$. Moreover, if we define 
	\begin{align}
		\proj_\alpha z := x,
	\end{align} then $\proj_\alpha$
	is linear and 
	\begin{align} 2 \proj_\alpha z = z e^{-i\alpha} + \overline{z} e^{i\alpha} = 2 \Re(z e^{-i\alpha}) \end{align}
	\end{lemma}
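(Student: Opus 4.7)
The plan is to reduce everything to the standard decomposition $w = \Re(w) + i\,\Im(w)$ after an appropriate rotation. Specifically, I would multiply the proposed equation $z = xe^{i\alpha} + ye^{i(\alpha+\pi/2)}$ by $e^{-i\alpha}$ and use $e^{i\pi/2}=i$ to rewrite it as $z e^{-i\alpha} = x + iy$. Since every complex number has a unique real and imaginary part, this immediately gives both existence and uniqueness of $x,y \in \R$, together with the explicit formulas $x = \Re(z e^{-i\alpha})$ and $y = \Im(z e^{-i\alpha})$. In particular, $\proj_\alpha z = \Re(z e^{-i\alpha})$.

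Once this identification is in hand, $\R$-linearity of $\proj_\alpha$ follows by composing two $\R$-linear maps: the rotation $w \mapsto w e^{-i\alpha}$ (which is even $\C$-linear, hence $\R$-linear) and the real-part map $\Re:\C \to \R$. The closed-form identity is then just $w + \overline{w} = 2\Re(w)$ applied with $w = z e^{-i\alpha}$, combined with $\overline{z e^{-i\alpha}} = \overline{z}\,e^{i\alpha}$, yielding $2\proj_\alpha z = z e^{-i\alpha} + \overline{z}\,e^{i\alpha} = 2\Re(z e^{-i\alpha})$.

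There is no substantive obstacle here; the lemma is essentially a book-keeping device that rephrases orthogonal projection onto the ray of angle $\alpha$ as a real part, so that subsequent arguments can manipulate expressions like $\proj_\alpha \zeta_N(s)$ using complex-analytic tools (conjugates, holomorphic factors, Stirling) rather than $\R^2$-geometry. The only thing to be slightly careful about is noting that ``linear'' here must mean $\R$-linear, not $\C$-linear, since $\proj_\alpha$ takes values in $\R$ and in particular $\proj_\alpha(iz) \neq i\proj_\alpha z$ in general.
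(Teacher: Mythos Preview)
Your proof is correct and is exactly the natural argument one would give; the paper itself simply writes ``Immediate'' for this lemma, so your rotation-then-real-part approach is the intended one.
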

	
	\begin{proof}
	Immediate.
	\end{proof}
	
	Another lemma is necessary to obtain behavior of trigonometric functions.
	
	\begin{lemma}\label{Trig}
	Let $\omega_1,\omega_2 > 0$, $\phi_1,\phi_2 \in \R$ be with $\omega_1 \neq \omega_2$. Then, for any 
	interval $I$, 
	\begin{align} \left\vt \int_I \sin(\omega_1 t + \phi_1) \sin(\omega_2 t + \phi_t) \diff t \right\vt \ll \frac{1}{\vt \omega_1 - \omega_2 \vt}. \end{align}
	\end{lemma}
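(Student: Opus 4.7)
The plan is to use the product-to-sum trigonometric identity
\[ \sin A \sin B = \tfrac{1}{2}\bigl[\cos(A-B) - \cos(A+B)\bigr] \]
to rewrite the integrand as a difference of two pure cosines of linear arguments, namely $\cos((\omega_1-\omega_2)t + (\phi_1-\phi_2))$ and $\cos((\omega_1+\omega_2)t+(\phi_1+\phi_2))$. Splitting the integral accordingly reduces the problem to bounding two elementary oscillatory integrals of the form $\int_I \cos(\omega t + \phi)\,dt$.

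For each such integral the antiderivative is $\omega^{-1}\sin(\omega t + \phi)$, which evaluated at the endpoints of $I$ gives an absolute bound of $2/|\omega|$. Applied to the first piece this yields a contribution of $O(1/|\omega_1-\omega_2|)$, which is precisely the target bound. For the second piece we obtain $O(1/(\omega_1+\omega_2))$, and since $\omega_1,\omega_2>0$ we have $\omega_1+\omega_2 \geq |\omega_1-\omega_2|$, so this contribution is likewise $O(1/|\omega_1-\omega_2|)$. Combining the two by the triangle inequality finishes the proof.

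There is no real obstacle here: the hypothesis $\omega_1 \neq \omega_2$ ensures the denominator is nonzero, and the hypothesis $\omega_1, \omega_2 > 0$ is used only to guarantee that the sum-frequency contribution is dominated by the difference-frequency one. The constant implicit in the $\ll$ is absolute (one can take it to be $2$). The argument is otherwise a routine calculation with antiderivatives of sinusoids.
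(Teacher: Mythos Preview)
Your proof is correct and follows essentially the same approach as the paper: the paper's proof simply states that the result follows by writing the integrand as a sum of cosines via the product-to-sum identity and then using the inequality $|\omega_1-\omega_2| \leq |\omega_1+\omega_2|$, which is precisely what you have carried out in detail.
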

	
	\begin{proof}
	The result follows immediately by writing the integrand as a sum of cosine functions ans using the fact $\vt \omega_1 - \omega_2 \vt\leq \vt \omega_1 +\omega_2 \vt$.
	\end{proof}
	

\begin{lemma}\label{Monotone}
For any fixed $\sigma$ and sufficiently large $t$, $ \pdv{}{t} \arg G(\sigma+it) < 0$. 
\end{lemma}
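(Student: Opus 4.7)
The strategy is to reduce the claim to computing $\Re(G'(s)/G(s))$ via the Cauchy--Riemann equations, then extract its leading asymptotic from Lemma \ref{digamma}. For $t$ sufficiently large (and any fixed $\sigma$) all Gamma factors appearing in $G(s)$ are away from their poles, so $G$ is holomorphic and non-vanishing in a neighborhood of $\sigma + it$ and a local branch of $\log G(s)$ is defined. Writing $\log G = u + iv$ with $v = \arg G$, the Cauchy--Riemann equations give
\[
\pdv{}{t} \arg G(\sigma + it) = \pdv{u}{\sigma} = \Re\!\left(\frac{G'(s)}{G(s)}\right),
\]
so it suffices to prove this real part is negative for large $t$.

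Next, taking the logarithmic derivative of the definition \eqref{def:Del},
\[
\frac{G'(s)}{G(s)} = 2 \log \lambda - \sum_{i=1}^k \alpha_i \bigl[\psi(\alpha_i s + \beta_i) + \psi(\alpha_i(\delta - s) + \beta_i)\bigr],
\]
where $\psi = \Gamma'/\Gamma$ is the digamma function. For fixed $\sigma$ and $t \to \infty$, the two arguments $w_i := \alpha_i s + \beta_i$ and $\tilde w_i := \alpha_i(\delta - s) + \beta_i$ form a near-conjugate pair, both of modulus $\alpha_i t + O(1)$ with arguments tending to $\pm \pi/2$. Invoking Lemma \ref{digamma}, one has $\psi(w_i) + \psi(\tilde w_i) = \log w_i + \log \tilde w_i + O(1/t)$, and since $w_i \tilde w_i = \alpha_i^2 t^2 + O(t)$ is asymptotically a positive real number, its principal logarithm is $2\log(\alpha_i t) + O(1/t)$. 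Substituting back yields
\[
\Re\!\left(\frac{G'(s)}{G(s)}\right) = 2 \log \lambda - 2 \sum_{i=1}^k \alpha_i \log \alpha_i - 2A \log t + O(1/t),
\]
which, since $A > 0$, is negative once $t$ exceeds a threshold depending on $\sigma$, giving the desired conclusion.

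The only nontrivial point is the cancellation of the $\pm i\pi/2$ contributions from $\log w_i$ and $\log \tilde w_i$: handled separately, each individual digamma has an imaginary part of order $1$, which would obscure the sign of the real part. Combining the two logarithms into the single $\log(w_i \tilde w_i)$ of an asymptotically positive real number is the cleanest way to see the cancellation and to expose the dominant term $-2A\log t$. Aside from this bookkeeping, the argument is a straightforward application of the Stirling/digamma asymptotic.
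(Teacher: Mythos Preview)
Your proof is correct and follows essentially the same route as the paper: both reduce the computation of $\pdv{}{t}\arg G$ to the real parts of digamma values at $\alpha_i s+\beta_i$ and $\alpha_i(\delta-s)+\beta_i$, then invoke Lemma~\ref{digamma} to see these grow like $\log(\alpha_i t)$, forcing the derivative to $-\infty$. Your version is in fact more explicit than the paper's (which just says the real part ``tends to infinity'' and has a typo in the displayed sum). One small remark: your closing paragraph about the $\pm i\pi/2$ cancellation is unnecessary, since you only ever need $\Re(\log w_i)=\log|w_i|$, which is unaffected by the imaginary part of $\log w_i$; treating the two terms separately works just as well.
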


\begin{proof}
The proof of this lemma relies on the asymptotics of the digamma function $\psi(s)$. 
As usual, we consider each term of $\chi(s)$ separately.
We have that 
\begin{align} \pdv{}{t} \arg \Gamma(\alpha s + \beta) = \Im \left( \pdv{}{t} \log \Gamma(\alpha s + \beta) \right) = 
\alpha \Re \left( \psi (\alpha s + \beta) \right). \end{align}
By Lemma \ref{digamma}, $\Re \left( \psi (\alpha s + \beta) \right)$ tends to infinity  for fixed $\sigma$ and $t\to\infty$. 
So, $\pdv{t} \arg \Gamma( \alpha s + \beta)$
can be made arbitrarily large with large $t$. Consequently, 
\begin{align} \pdv{}{t} \arg(G(\sigma+it)) =& \pdv{}{t} \sum_{i=1}^k \arg (\Gamma(\alpha_i s + \beta)) 
- \arg(\Gamma(\alpha_i s + \beta_i)) \\
&+ \log \lambda < 0\end{align}
for sufficiently large $t$. 
\end{proof}


\section{Proof of Theorem \ref{Strip}}

We prove the theorem by showing that there exist positive number $T_0, \gamma$ and $\sigma$ so that there are no zeros of $\zeta_N(s)-a$ outside the region $\{\sigma+it:|\sigma|\leq \sigma_aN^{\gamma}, t>T_0\}$.
We split the proof in two cases whether $a \neq a_1$ and $a = a_1$.

In the first case, since $a\neq a_1$ and $\lambda_2>1$ by the definition, then by \eqref{a1bdd} of Lemma \ref{Fn bound}, we find that
\begin{align} 
\vt F_N(s) - a_1 \vt\ll \frac{1}{\lambda_2^{\sigma}}< \frac{\vt a_1 - a \vt}{4}\label{ineq01}
\end{align}
for suitable choices of $\sigma$. Now, we choose $T_0$, $\sigma_a$ and $\gamma$ such that 
for all $\sigma \geq \sigma_a N^\gamma$, the inequality \eqref{ineq01} holds as well as the bounds in Lemma \ref{Inequality} and Lemma \ref{Fn bound} can be obtained. Hence for all $\sigma \geq \sigma_a N^\gamma$ we have
\begin{align}
\vt G(s) F_N(\delta-s)  \vt \leq \vt G(s) \vt N^{\mu \sigma + \nu} < \frac{\vt a_1 - a \vt}{4}. \label{ineq02}
\end{align}
Therefore, from the definition \eqref{def:zetN} of $\zeta_N(s)$ and from the \eqref{ineq01} and \eqref{ineq02} we find 
\begin{align}
\vt \zeta_N(s) - a_1 \vt \leq \hf\vt a_1 - a\vt.         \end{align}
If instead $a = a_1$, then we apply Lemma \ref{Fn bound} and Lemma \ref{Inequality} to the truncated
series $(F_N(s)-a_1) \lambda_2^{s}$. Hence for suitable choices of $\lambda_a$ and $\gamma$ and for all $\sigma\geq \sigma_a N^{\gamma}$ we have 
\begin{align} 
\vt (F_N(s) - a_1) \lambda_2^{s} - a_2 \vt < \frac{\vt a_2 \vt}{4}  
\end{align} 
and
\begin{align} \vt G(s) F_N(\delta-s) \lambda_2^s \vt < \frac{\vt a_2 \vt}{4}. \end{align}
Hence, by combining above two inequalities, we have
\begin{align}\vt (\zeta_N(s) - a)\lambda_2^s - a_2 \vt \leq \hf[\vt a_2 \vt].
\end{align}
This proof the first part of Theorem \ref{Strip}.

To prove the next part first we show that $\vt \zeta_N(s) \vt > \vt a \vt + 1$ as $\sigma\to -\infty$. 
As previous we consider  two cases, with $a_1 \neq 0$ and $a_1 = 0$.
Hence from \eqref{def:zetN} and the triangle inequality, it suffices to show that if $\sigma \leq -\sigma_aN^{\gamma}$, then
\begin{align} \left\vt G(s) F_N(\delta-s) \right\vt  \frac{\vt F_N(\delta-s) \vt}{\vt G(\delta-s) \vt} > \vt F_N(s) \vt + \vt a \vt + 1. \end{align}
In other words, we need to show that 
\begin{align}
\frac{\vt F_N(\delta-s) \vt}{\vt G(\delta-s) \vt} > \vt F_N(s) \vt + \vt a \vt + 1
\end{align}
or
\begin{align} \vt G(\delta-s) \vt (\vt F_N(s) \vt + \vt a \vt + 1) < \vt F_N(\delta-s) \vt. \end{align}
In the former case, let $\sigma$ be sufficiently negative so that, 
in view of \eqref{a1bdd} of Lemma \ref{Fn bound}, $\vt F_N(\delta-s) \vt > \hf[\vt a_1 \vt]$.
Then, by Lemma \ref{Inequality} find $T_0$,$\sigma_a, \gamma$ such that 
\begin{align} \vt G(\delta-s) \vt (N^{\mu \sigma + \nu} + \vt a \vt + 1) < \hf[\vt a_1 \vt]. \end{align}
for $\sigma \leq -\sigma_a N^\gamma$.
If instead $a_1 = 0$, we use $\vt F_N(\delta-s) \vt > \hf[\lambda_2^{\sigma}]$, 
and find $T_0$, $\sigma_a, \gamma$ for which
\begin{align} \vt G(\delta-s) \vt (N^{\mu \sigma + \nu} + \vt a \vt + 1) < \hf[\lambda_2^{\sigma}]. \end{align}
This concludes the proof.

\section{Proof of Theorem \ref{Count}} \label{pthmzerocount}

Let use define a new general Dirichlet series with $F_N^*(s) := F_N(s) - a$ and also deonte $\zeta_N^*(s) := F^*(s) + G(s) F^*(\delta-s)$. 
Using Theorem \ref{Strip}, 
we know that all $a$-values of $\zeta_N(s)$ with $t>T_1$ lie within $\vt \sigma \vt \leq \sigma'_a N^{\gamma'}$
for some $\sigma'_a,\gamma' > 0$. Likewise, 
we have that all $-a$-values of $\zeta_N^*(s)$ with $t>T_2$ lie
within $\vt \sigma \vt \leq \sigma_a^* N^{\gamma^*}$ for some
$\sigma_a^*,\gamma^* > 0$.
We choose $T_a$ be the maximum of $T_1$ and $T_2$, $\sigma_a$ be the maximum of $\sigma'_a$ and $\sigma_a^*$ and $\gamma$ be the maximum of $\gamma'$ and $\gamma^*$ for this proof. We also assume that $T_a \geq 4 \sigma_a$, 
that  $t \geq T_a$ with $\epsilon = \pi/4$ gives $\vt O(\vt s^{-1} \vt)\vt < 1$ in Lemma \ref{Stirling},
and finally that Lemma  \ref{Chi Bound} holds for $t \geq T_a N^\gamma - 4r$.

Let us define the function
\begin{align} h_N(s) := \zeta_N(s) - a = F_N(s) + G(s) F_N(\delta-s) - a.\label{hNdef} \end{align}
Consider the rectangular contour $\phi$ oriented 
counterclockwise with vertices $r+iT, r+i(T+U), -r+i(T+U)$, and $-r+iT$, where $r = \sigma_a N^\gamma$. Then by the argument principle,
\begin{align} N_a(T, U) = \frac{1}{2\pi} \Delta_\vphi \arg(h_N(s)).\label{arghn} \end{align}
We compute the argument for each of the sides $\vphi_1,\vphi_2,\vphi_3$,and $\vphi_4$ of the rectangle $\phi$, where $\vphi_1$ is the right vertical edge, $\vphi_2$ is the top edge, $\vphi_3$ is the left vertical edge and $\vphi_4$ is the bottom edge.

\subsection{On the right vertical edge}
By Theorem \ref{Strip}, we are guaranteed that if $a \neq a_1$, then $\Delta_{\vphi_1} \arg (h_N(s)) \leq \pi$,
since the image of $h_N(s)$ stays within a disk not containing the origin.
If instead $a = a_1$, then
\begin{align} \vt \Delta_{\vphi_1} \arg(h_N(s)) + \Delta_{\vphi_1} \arg(\lambda_2^s) \vt = \vt \Delta_{\vphi_1} \arg(h_N(s) \lambda_2^s) \vt \leq \pi. \end{align}
Therefore, 
\begin{align}
\Delta_{\vphi_1} \arg(h_N(s)) = - U \log \lambda_2 + O(1).\label{bdphi1}
\end{align} 

\subsection{On the left vertical edge}
Recall the fact $G(s) \Delta(\delta-s) = 1$. Then
\begin{align} 
h_N(s) &= G(s) (F_N(\delta-s) + \Delta(\delta-s) (F_N(s)-a))\\
&=G(s) (\zeta^*_N(\delta-s) + a).\label{Reflection}
\end{align}
Note that $\Delta_{\vphi_3}\arg h_N(s)= \Delta_{\vphi_3}\arg G(s)+\Delta_{\vphi_3}\arg(\zeta_N^*(\delta-s) + a )$. To compute $\Delta_{\vphi_3}\arg(\zeta_N^*(\delta-s) + a )$,
we can imitate the previous computation on the right edge and apply Theorem \ref{Strip} to $\zeta_N^*(s)$ for $-a$-value. Then
we have 
\begin{align}\Delta \arg_{\vphi_3} (\zeta_N^*(\delta-s) + a )
= \begin{cases} 
	O(1) &\text{ if } a_1 \neq 0 \\
	U \log \lambda_2 + O(1) &\text{ if } a_1 = 0
	\end{cases} \end{align}  
	Now we compute $\Delta_{\vphi_3} \arg G(s)$.
	From Lemma \ref{Stirling} one has
	\begin{align}
& \Delta_{\vphi_3} \arg(\Gamma(\alpha s + \beta)) \\ = \ & 
- \Delta_{\vphi_3} \arg\left( (\alpha s + \beta)^{\alpha s + \beta} e^{-\alpha s - \beta} \sqrt{\frac{2\pi}{\alpha s + \beta}} (1 + O(\vt (\alpha s + \beta)^{-1} \vt)) \right).
\end{align}
Since on $\phi_3$, $s=r+it$ with $T\leq t\leq T+U$ we have 
\begin{align}
&\Delta_{\vphi_3} \arg(\Gamma(\alpha s + \beta))\\&= \alpha (T+U) \log \vt \beta - \alpha r  + \alpha (T+U)i \vt -
\alpha T \log \vt \beta -\alpha r + \alpha T i \vt - \alpha U   \\
& =   \alpha (T + U) \log (T+U) - \alpha T \log T + (\alpha \log \alpha - \alpha) U \\
& \qquad+ \alpha (T+U) \log\left \vt 1 + \frac{r-\beta/\alpha}{T+U}i \right\vt -
\alpha T \log \left\vt 1 + \frac{r-\beta/\alpha}{T}i \right\vt + O(1). 
\end{align}  
Note that $\log \vt 1 + \epsilon i \vt = o(\epsilon)$ and $\beta/\alpha, r \ll N^{\gamma}=o(T+U)$. Hence we have 
\begin{align}
\Delta_{\vphi_3} \arg(\Gamma(\alpha s + \beta))=  \alpha (T + U) \log (T+U) - \alpha T \log T + (\alpha \log \alpha - \alpha) U + O(N^\gamma)
\end{align}
Similarly, by requiring $\delta \ll T$,
\begin{align} \Delta_{\vphi_3} \arg(\Gamma(\alpha (\delta-s) + \beta)) 
= & \ \alpha (T + U) \log (T+U) - \alpha T \log T \\ &+ (\alpha \log \alpha - \alpha) U + O(N^\gamma). \end{align}
Henceforth, from \eqref{def:Del} we can conclude that 
\begin{align}
& \Delta_{\vphi_3} \arg(G(s))  \\
= \ & \Delta_{\vphi_3} \arg ( \lambda^{2s-\delta} ) + O(N^\gamma) + \\
&2 \sum_{i=1}^k 
\alpha_i (T + U) \log (T+U) - \alpha_i T \log T + (\alpha_i \log \alpha_i - \alpha_i) U +  \\
= \ & -2U \log \lambda + 2 A 
\left( (T+U) \log (T+U) - T \log T \right) + 2 B U + O(N^\gamma)\label{bdphi3}
\end{align}
where $A = \sum_{i=1}^k \alpha_i$ and $B = \sum_{i=1}^k \alpha_i \log \alpha_i - \alpha_i$. 

\subsection{On the horizontal edges} \label{ssec:horizontal-edges}
We aim to upper bound the change in arguments on $\vphi_2$ by $O_a(N^\gamma \log T)$. 
From Theorem \ref{Strip} we are guaranteed that $\vt \zeta_N(-r+iT) \vt \geq 1 + \vt a \vt$,
so in particular $\zeta_N(-r+iT) - a \neq 0$.
Let $\arg(\zeta_N(-r+iT) - a) = \alpha$ and
define
\begin{align}  F_N^\dagger(s) := \sum_{i=1}^N \frac{\overline{a_n}}{\lambda_n^s} \ta 
\zeta_N^\dagger(s) := F_N^\dagger(s) + G(s) F_N^\dagger(\delta-s). \end{align}
Let us also define
\begin{align}g_N(s) := \zeta_N(s+iT) e^{-i\alpha}+ \zeta_N^\dagger(s-iT) e^{i\alpha} - 2 \proj_\alpha a, \end{align}
where $\proj_\alpha z$ is given in Lemma \ref{projlemma}. Since for any real number $s$, 
\begin{align} \zeta_N^\dagger(s-iT) = \overline{\zeta_N(s+iT)} \end{align}
then from \eqref{hNdef}
\begin{align}
g_N(s) = 2 \proj_\alpha h_N(s+iT)=2\Re\left(h_N(s+iT)e^{-i\alpha}\right)
\end{align}
for any $s\in \mathbb{R}$.
Thus, we have extended the projection of $h_N$ on $\alpha$ to a holomorphic function $g_N(s)$.
An upper bound on 
$\Delta_{\vphi_2} \arg (h_N(s))$ which is an upper bound on $\Delta_{\vphi_2} \arg (h_N(s)e^{-i\alpha})$ would by $\pi$ times one plus the number of zeros of $g_N(s)$ on $-r \leq \sigma \leq r$ and $t = T$.  
We apply Jensen's formula to $g_N(s)$ on a circle centered at $-r$
with radius $4r$. If $n(u)$ denotes the number of zeros of $g_N$ in the disk $D(-r,u)$, then
\begin{align} 
\frac{1}{2\pi} \int_0^{2\pi} \log \frac{ \vt g_N(-r + 4r e^{i \theta}) \vt}{\vt g_N(-r) \vt} \diff \theta
= \int_0^{4r} \frac{n(u)}{u} \diff u \geq \int_{2r}^{4r} \frac{n(u)}{u} \diff u \geq n(2r) \log 2. \label{jensenfor}
\end{align}
Note that all zeros of $g_N(s)$ on $\vphi_2$ are counted within $n(2r)$. 
Additionally, our choice of $T_a$ allows us to apply
our estimates on $F_N(s)$ and $G(s)$. 
We are guaranteed that 
\begin{align}
	\vt g_N(-r) \vt \geq 2.\label{lbdgn}
\end{align}
Applying lemmas \ref{Chi Bound} and \ref{Fn bound} 
on the boundary of $D(-r,4r)$ we have
\begin{align}
\vt g_N(s) \vt &\ll N^{5r \mu + \nu} \left( 1 + \lambda^{10r - \delta} 
\prod_{i=1}^k \left( \alpha_i (T+4r) \right)^{\alpha_i (10r - \delta)} \right).
\end{align}
Therefore
\begin{align}
\log \vt g_N(s) \vt \leq & (5r \mu + \nu) \log N + (10r - \delta) \log \lambda \\
&+ \sum_{i=1}^k \alpha_i (10r - \delta) \log( \alpha_i (T+4r)) + C
\end{align}
for some constant $C$. Since $r =\sigma_a N^\gamma$ and $N^\gamma \ll T$, we conclude that 
\begin{align}
\log \vt g_N(s) \vt \ll N^\gamma \log T.\label{ubdgn}
\end{align} 
Combining \eqref{jensenfor}, \eqref{lbdgn} and\eqref{ubdgn} we have 
\begin{align} \Delta_{\vphi_2} \arg(h_N(s)) \leq \pi(n(2r) + 1) &\ll \max_{0\leq \theta\leq 2\pi} \log \vt g(-r+4re^{i\theta}) \vt\ll N^\gamma \log T.\label{bdphi2}
\end{align}
By similar arguments, one has
\begin{align}
\Delta_{\vphi_4}\arg(h_N(s))\ll N^\gamma \log (T+U)).\label{bdphi4}
\end{align}

Now putting together equations \eqref{bdphi1}, \eqref{bdphi3}, \eqref{bdphi2} and \eqref{bdphi4} in \eqref{arghn}, we obtain that 
\begin{align} N_a(T,U) =&  \frac{A}{\pi}  \left( (T+U) \log (T+U) - T \log T \right) \\ &+ \frac{B - \log \lambda + \Psi}{\pi} U 
+ O_a(N^\gamma (T+U)), \end{align}
where $2 \Psi = \log \lambda_2$ if $a_1 = 0, a \neq 0$, $\Psi = -\log \lambda_2$ if $a = a_1 \neq 0$,
and $0$ otherwise. 
This concludes the proof. 

\section{Proof of Theorem \ref{Cluster}}


Let $N_1$ be the number of $a$-values for $\zeta_N$ with $\sigma > \hf[\delta] + \epsilon$,
and $N_2$ be the number of $a$-values for $\zeta_N$ with $\sigma < \hf[\delta] - \epsilon$ inside the heights $t=T$ and $t=T+U$. 
Recall the equation \eqref{Reflection},
\begin{align} 
h_N(s)= \zeta_N(s) - a = G(s) (\zeta^*_N(\delta-s) + a). 
\end{align} 
Since $G(s)$ is non-zero for all $s\in \mathbb{C}$ with $\Im(s)\neq 0$, then $N_2$ is the same as the number of $-a$-values for $\zeta_N^*$ with $\sigma > \hf[\delta]+\epsilon$.
Hence, we will estimate $N_1$ first and an estimation for $N_2$ can be found similarly. 

Let $\sigma_r = \sigma_a N^\gamma$. We apply Littlewood's lemma to $h_N(s) = \zeta_N(s) - a$ on the rectangle $R$ bounded by the lines $\sigma = \hf[\delta]$, $\sigma= \sigma_r$, $t = T$ and $t=T+U$. 
Let $\rho$ denotes a zero of $h_N(s)$, then
\begin{align}
2 \pi \sum_{\rho \in R} \left(\Re(\rho) - \hf[\delta]\right) &= \int_{T}^{T+U} \log \vt \zeta_N(\hf[\delta] + it) - a \vt 
- \log \vt \zeta_N(\sigma_r+it) - a \vt \diff t \\ &\quad+
\int_{\hf[\delta]}^{\sigma_r} \arg (\zeta_N(\sigma+iT+iU)-a) - \arg(\zeta_N(\sigma+iT)-a) \diff \sigma. 
\end{align}
First we bound the first integral. Note that  $\left\vt \Delta\left(\hf[\delta] + it\right) \right\vt = 1$.
Therefore for some $\nu > 0$ and from Lemma \ref{Fn bound} we have $\vt \zeta_N(\hf[\delta] + it) \vt \ll N^\nu$ and hence 
\begin{align}  \int_T^{T+U} \log \vt \zeta_N(\hf[\delta]+it)-a \vt \diff t
\leq \nu U \log N+ O_a(U). \end{align}
For the second term in the first integral, if $a \neq a_1$, then from Theorem \ref{Strip} we know that
$ \vt \zeta_N(\sigma_r+it)-a \vt > \vt a - a_1 \vt/2 $, which gives  
\begin{align} \int_T^{T+U} 
\log \vt \zeta_N(\sigma_r+it)-a\vt \diff t \geq O_a(U). \end{align}
Instead, if $a = a_1$, then Theorem \ref{Strip} gives 
$\vt \zeta_N(\sigma_r+it)-a \vt > \lambda_2^{-\sigma_r} \vt a_2 \vt/2$.  Hence
\begin{align} \int_T^{T+U} 
\log \vt \zeta_N(\sigma_r+it)-a\vt \diff t > \left( -\sigma_r \log \lambda_2 + \log \vt a_2 \vt/2 \right) U = O(U).\label{1stintegral} \end{align}

Now we bound the second integral. Again from Theorem \ref{Strip} we find that $\arg(\zeta_N(\sigma_r+iT+iU)-a)$
and $\arg(\zeta_N(\sigma_r+iT))$ differs by at most $\pi$ along the right edge.
Moreover, from \eqref{bdphi2} and \eqref{bdphi4} we find that
$\arg(\zeta_N(\sigma+iT)-a)$ varies by up to $O_a(N^\gamma \log(T))$ and $\arg(\zeta_N(\sigma+iT+iU))$
varies by up to $O_a(N^\gamma \log(T+U))$ along the top and bottom edges.
Therefore 
\begin{align}
&\int_{\hf[\delta]}^{\sigma_r} \arg(\zeta_N(\sigma+iT+iU)-a)-\arg(\zeta_N(\sigma+iT)-a) \diff \sigma \\
\ll& (\sigma_r - \hf[\delta]) N^\gamma (\log(T+U) + \log(T)) \\ \ll_a& N^{2\gamma} \log(T+U). \label{2ndintegral} 
\end{align}
Combining \eqref{1stintegral} and \eqref{2ndintegral} we find 
\begin{align} 2 \pi \sum_{\rho \in R} \left(\Re(\rho) - \hf[\delta]\right) = O_a(N^{2\gamma} \log(T+U)) + O_a(U \log N). \end{align}
We can thus conclude that, $N_1$, the number of zeroes of $\zeta_N(s)$ with
$\Re(\rho) - \hf[\delta] > \epsilon$, is upper bounded by
\begin{align}N_1 = O_a(N^{2\gamma} \log(T+U) /\epsilon) + O_a(U \log N/\epsilon). \end{align}

Now observing from the definition that $N_a(T,U) -N_{a,\epsilon}(T+U)=N_1+N_2$. Since, the same computation works for $N_2$ when looking at zeros of $\zeta^*_N(s)+a$, so, we have
\begin{align}N_a(T,U) -N_{a,\epsilon}(T+U) = O_a(N^{2\gamma} \log(T+U) /\epsilon) + O_a(U \log N/\epsilon). \end{align}
This completes the proof of the theorem.

\section{Proof of Theorem \ref{Critical Zero}} From \eqref{def:zetN} we have 
\begin{align}
\zeta_N(s)=F_N(s)\left(1+G(s)\frac{F_N(\delta-s)}{F_N(s)}\right).
\end{align}
Since, on the critical line, $\vt G(\hf[\delta]+it) \vt = 1$, then one can write $G(\hf[\delta]+it)=e^{i\theta}$, where $ \vartheta=\vartheta(t)=\arg G(\hf[\delta]+it)$. Also, note that from \eqref{def:F} if $a_n$'s are real
then $F_N(\hf[\delta]-it) = \overline{F_N(\hf[\delta]+it)}$. Hence, if we denote $z =z(t)= F_N(\hf[\delta]+` it)$ and $\arg(z) = \phi=\phi(t)$ then for all real $a_n$ we have 
\begin{align} 
\zeta_N\left(\hf[\delta]+it\right) &= F_N\left(\hf[\delta]+it\right)\left(1+G\left(\hf[\delta]+it\right)\frac{F_N\left(\hf[\delta]-it\right)}{F_N\left(\hf[\delta]+it\right)}\right)\\
&=|z|e^{i\phi}(1+e^{i(\theta-2\phi)}).\label{znarg}
\end{align}
Henceforth, $\zeta_N(\hf[\delta]+it) = 0$ if and only if $|z(t)| = 0$,
or  $\vartheta(t) - 2\phi(t)$ is an odd-multiple of $\pi$. Also, if $F(\delta/2+ig)=0$ then we define
\begin{align}
\frac{F_N\left(\hf[\delta]-ig\right)}{F_N\left(\hf[\delta]+ig\right)}=\lim_{t\to g}\frac{F_N\left(\hf[\delta]-it\right)}{F_N\left(\hf[\delta]+it\right)}.
\end{align}
Next we  define the following three quantities. 
\begin{itemize}
\item[a)] $N_F^0$ be the cardinality of the set $S_F^0=\{\frac{\delta}{2}+it:F(\delta/2+it)=0, T\leq t\leq T+U\}$.
\item[b)]  $N_F^+$ be the cardinality of the set $S_F^+=\{\sigma+it:F(\sigma+it)=0, \sigma>\delta/2, T\leq t\leq T+U\}$.
\item[c)]  $N_Z^0$ be the cardinality of the set $S_Z^0$ consisting all $t$'s such $\zeta_N\left(\hf[\delta]+it\right)=0$  but $F_N\left(\hf[\delta]+it\right) \neq 0$ with $T < t < T+U$. In other words, $S_Z^0$ consists all $t$'s such that  $F_N\left(\hf[\delta]+it\right) \neq 0$ but $\theta-2\phi$ are odd-multiple of $\pi$ with $T < t < T+U$. 
\end{itemize}
Then, from the definition of $N_{a,0}$ we find that $N_{0,0}\geq N_F^0 + N_Z^0$. 
Our goal to obtain a lower bound of $N_F^0 + N_Z^0$.

Consider the line segment $L_\epsilon$ between $\hf[\delta]+iT$ and
$\hf[\delta] + i(T + U)$ directed upward where about each element in $S_F^0$, a line segment of length $2\epsilon$
is replaced by a semicircle centered at that element of radius $\epsilon$ to the right of the zero.
We choose $\epsilon$ small enough so that no semicircles overlap,
no element of $S_F^+$ lies to the left of $L_\epsilon$, and that all occurrences of elements of $S_Z^0$
occur on the line segments of $L_\epsilon$. Also, consider the function $A(s)=\arg G(s)-2\arg F_N(s)$. Then $A\left(\hf[\delta]+it\right)=\vartheta(t) - 2\phi(t)$. Note that the set of all $t$ from the line segment part of $L_{\epsilon}$ such that the function $\vartheta(t) - 2\phi(t)$ intersect the line $t=(2k+1)\pi$ formed the set $S_Z^0$. Let $m(g)$ be the multiplicity of the zero $\hf[\delta]+ig$ of $F_N(s)$. Then we have 
\begin{align}
N_F^0=\sum_{T\leq g\leq T+U}m(g).\label{lbd1}
\end{align}
As limit $\epsilon \rightarrow 0$, the image of the semicircle part of $L_{\epsilon}$ surrounded the zero  $\hf[\delta]+iT$ gives a vertical line of the length $\pi m(g)$. Therefore if $\mathcal{N}$ is the maximum numbers of intersections of the lines $t=(2k+1)\pi$ and the vertical line segments of length $\pi m(g)$ coming from the image of the semicircle part of $L_{\epsilon}$ surrounded the zeros  $\hf[\delta]+ig$ then 
\begin{align}
N_Z^0\geq \lim_{\epsilon\to o}\frac{\left\lvert\Delta_{L_{\epsilon}}(A(s))\right\rvert}{2\pi}-\mathcal{N}+O(1).\label{lbd2}
\end{align}
Since the maximum number of intersections of lines $t=(2k+1)\pi$ and any vertical line segments of length $\pi m(g)$ is $m(g)$. Hence from \eqref{lbd1} and \eqref{lbd2} we find 
\begin{align}
N_Z^0+N_F^0\geq \lim_{\epsilon\to o}\frac{\left\lvert\Delta_{L_{\epsilon}}(\arg G(s)-2\arg F_N(s)\right\rvert}{2\pi}+O(1).\label{bb0}
\end{align}
Since $G(s)$ has no zeroes near $L_\epsilon$, to obtain a lower bound of $\Delta_{L_{\epsilon}}(\arg G(s))$, it suffices to evaluate the change of $G(s)$
over the straight line segments of $L_{\epsilon}$ joining $\hf[\delta] + iT$ and $\hf[\delta]+i(T+U)$. 
Using Lemma \ref{Stirling} and following the same steps of \eqref{bdphi3} , we have
\begin{align} 
\Delta_{L_{\epsilon}} \arg(G(s)) =  -2U \log \lambda +2 A   \left( (T+U) \log (T+U) - T \log T \right) + 2 B U + O(1). \label{bb1}
\end{align}
Next, we evaluate  $\Delta_{L_\epsilon} \arg(F_N(s))$ by using the argument principle. Let $\sigma_0$ be such that Lemma \ref{Fn bound} holds for $\sigma \geq \sigma_0$. That is, if $\sigma \geq \sigma_0$, then either $a_1 \neq 0$ and $\vt F_N(s) - a_1 \vt \leq a_1/2$, or $a_1 = 0$ and 
$\vt F_N(s) \lambda_2^s - a_2 \vt \leq a_2/2$. 
Consider the rectangular looking region with $L_\epsilon$ as the one vertical edge and the other vertical edge is the line segment from $\sigma_0 + iT$
to $\sigma_0 + iT + iU$, denoting it by $\phi_1$. Let us denote the horizontal edges by $\phi_3$ and $\phi_4$. It can be noted that all zeroes of $N_F^+$ lie in this region. 
Then
\begin{align} 2 \pi  N_F^+ = &- \Delta_{L_\epsilon} \arg(F_N(s)) + \Delta_{\vphi_1} \arg(F_N(s)) 
\\ &+ \Delta_{\vphi_3} \arg(F_N(s)) + \Delta_{\vphi_4} \arg(F_N(s)). \end{align}
Arguing similar to \eqref{bdphi1} we find, if $a_1 \neq 0$, then 
\begin{align}
\Delta_{\vphi_1} \arg(F_N(s)) = O(1).\label{b1}
\end{align}
Otherwise,
if $a_1 = 0$, then 
\begin{align}
\Delta_{\vphi_1} \arg(F_N(s)) = - U \log \lambda_2 + O(1).  \label{b2}
\end{align}
By our choice of $\sigma_0$, we have that $\vt F_N(\sigma_0 + iT) \vt \gg 1$. Let us denote $\alpha=\arg(F_N(\sigma_0+iT))$. Consider the function
\begin{align} 
g_N(s) = F_N(s+iT) e^{-i\alpha} + F_N(s-iT) e^{i\alpha}.
\end{align} 
Similar to \eqref{jensenfor}, apply Jensen's formula to the function $g_N(s)$ on the disk centered at $\sigma_0$ with radius $2 \sigma_0 + \delta$.
We have that $\vt g_N(\sigma_0) \vt \gg 1$, and 
$\vt g_N(s) \vt \ll N^{\mu (3 \sigma_0 + \delta) + \nu}$. Thus, we may conclude that 
\begin{align}
\Delta_{\vphi_3} \arg(F_N(s)) \ll \log N.\label{b3}
\end{align}
The same holds for $\vphi_4$, i.e., 
\begin{align}
\Delta_{\vphi_4} \arg(F_N(s)) \ll \log N.\label{b4}
\end{align}
Finally, by applying Jensen's formula to $F_N(s)$ directly about a circle encircling the entire rectangle with the same center, the radius of the circle is
of order $U$, so $\vt F_N(s) \vt \ll N^{\mu U + \nu}$,
and thus 
\begin{align}
N_F^+ = O(U \log N)
\label{b5}
\end{align}
Combining \eqref{b1}, \eqref{b2}, \eqref{b3}, \eqref{b4} and \eqref{b5} we conclude that 
\begin{align}
\Delta_{L_\epsilon} \arg(F_N(s))\ll U\log N.\label{bb2}
\end{align}
Substituting \eqref{bb1} and \eqref{bb2} in \eqref{bb0}, we obtain
\begin{align} N_Z^0 + N_F^0 \geq \frac{A}{\pi}   \left( (T+U) \log (T+U) - T \log T \right) + O(U \log N). \end{align} 
This completes the proof of the theorem. 

\section{Proof of Theorem \ref{Critical}}
Let $a\neq 0$. From \eqref{znarg} we have 
\begin{align} 
\zeta_N\left(\hf[\delta]+it\right)= |z|(e^{i\phi}+e^{i(\theta-\phi)}).
\end{align}
Hence, if $ \zeta_N(\hf[\delta]+it) \neq 0$ then \begin{align} 
\arg\left(\zeta_N\left(\hf[\delta]+it\right)\right) = \hf \arg\left(G \left(\hf[\delta]+it\right)\right). 
\end{align}
For a non-simple $a$-value of $\zeta_N(s)$ to occur on the line $\sigma=\delta/2$, it must be that 
\begin{align}
\pdv{}{t} \arg\left(\zeta_N\left(\hf[\delta]+it\right)\right) = 0.
\end{align} However,
for large enough $t$ and by the lemma \ref{Monotone} this will not happen, thereby the $a$-values with large enough $t$
on the critical line will all be simple. 

Now, on the critical line, $\vt G(\hf[\delta]+it) \vt = 1$. Let $\arg(a) = \alpha$. Since $a_n$'s are real, we can let $z(t) = F_N(\hf[\delta]+it)$ and $\overline{z}(t) = F_N(\hf[\delta]-it)$. 
We want to count the occurrences of $z(t) + G(\hf[\delta]+it)\overline{z}(t) = a$. By Lemma \ref{projlemma} and
by linearity of the projection we want 
\begin{align}
\proj_{\alpha} \left(z(t) + G(\hf[\delta]+it)\overline{z}(t)\right) = \vt a \vt \ta \proj_{\alpha+\pi/2} \left(z(t) + G(\hf[\delta]+it)\overline{z}(t)\right) = 0. \end{align}
But since $\vt z(t) \vt = \vt G(\hf[\delta]+it)\overline{z}(t) \vt$, by the Pythagorean Theorem it must be the case that 
$2 \proj_\alpha z = \vt a \vt$. 

Next, we upper bound the occurrences of $2 \proj_\alpha z(t) = \vt a \vt$. 
Once again, we extend the $2 \proj_\alpha F_N(\hf[\delta] + it)$ (excluding the constant term) 
to a holomorphic function
\begin{align} h(s) =  \sum_{n=2}^N a_n \lambda_n^{-\hf[\delta]-s}  e^{-i \alpha}
+ a_n \lambda_n^{-\hf[\delta]+s} e^{i\alpha} \end{align}
We want to upper bound the occurrences of $h(s) = \vt a \vt - 2 \proj_\alpha a_1$ on the imaginary axis.
Noting that $h$ is real on the imaginary axis, 
the number of times this can occur is at most one plus the number zeroes of the partial 
derivative of $h$ with respect to $t$ on the imaginary axis. 
Define
\begin{align} g(s) = \pdv{}{t} \phantom{} h(s) = 
\sum_{n=2}^N - i \log \lambda_n a_n \lambda_n^{-\hf[\delta]-\sigma} e^{- i \log \lambda_n t} e^{-i \alpha}
+ i \log \lambda_n a_n \lambda_n^{-\hf[\delta]+\sigma} e^{i \log \lambda_n t} e^{i\alpha}. \end{align}
We will argue using Jensen's formula that the number of zeroes of $g(s)$ on the imaginary axis with
$T < t < T+U$ is $O(U \log N)$ for sufficiently large $U$, and this is independent of the exact choice of $a$. 

To apply Jensen's formula, we need to lower bound $g(s)$ at the center.
Let $b_n = 2 a_n \lambda_n^{-\hf[\delta]} \log \lambda_n$. 
When $s$ is purely imaginary, 
\begin{align} g(it) = \sum_{n=2}^N b_n \sin \left( \log \lambda_n t + \phi_n \right) \end{align} 
for appropriate $\phi_n \in \R$.  We claim 
that if $U$ is sufficiently large, there always exists some $T < c < T+U$ for which 
$\vt g(ic) \vt \geq b_2/10$. To do this, consider the average value of 
\begin{align} f(t) = \sin(\log \lambda_2 t + \phi_2) 
\cdot g(it) \end{align}
for $T < t < T+U$. 
Using Lemma \ref{Trig} and assuming $U \gg \log \lambda_2$, we have 
\begin{align}
& \frac{1}{U} \int_T^{T+U} \sin(\log \lambda_2 t + \phi_2) \cdot g(it) \diff t \\
= & \ \frac{b_2}{U} \int_T^{T+U} \sin^2(\log \lambda_2 t + \phi_2) \diff t \\ 
&\qquad+ \sum_{n=3}^N
\frac{b_n}{U} \int_T^{T+U} \sin(\log \lambda_2 t + \phi_2) \sin(\log \lambda_n t + \phi_n) \diff t  \\
\geq & \ \frac{b_2}{3} - \sum_{n=3}^N \frac{b_n}{U} \frac{1}{\log \lambda_n / \lambda_2} .
\end{align}
Since $\vt a_n \vt$ and $\lambda_n$ are bounded by a polynomial in $N$, hence $b_n$ is also bounded by a polynomial in $N$.
Therefore, if 
\begin{align}
\sum_{n=3}^N b_n \leq N^\nu,
\end{align}
then by taking $U$ large enough so that
\begin{align} U \log \lambda_3/\lambda_2 \gg \frac{10}{b_2} N^\nu \end{align} 
for some appropriate $\nu > 0$ 
we have that the average value 
of $f(t) \geq b_2/10$. Thus, there must
exist some $t = c$ for which the inequality holds. But $\vt \sin(\log \lambda_2 c + \phi_2) \vt \leq 1$,
so we have $\vt g(ic) \vt \geq b_2/10$. 

The roots
of $g(s)$ we want to control on the imaginary line lie in $D(ic, U)$. 
We apply Jensen's formula to $g(s)$ centered at $ic$ with radius $2U$. We have that 
\begin{align} \frac{1}{2\pi} \int_0^{2\pi} \log \frac{\vt g(ic+2Ur^{i\theta}) \vt}{\vt g(ic) \vt}
\geq n(U) \log 2. \end{align}
However, $\vt g(ic) \vt$ is lower bounded by a fixed constant, and with lemma \ref{Fn bound}, 
$\vt g(s) \vt \ll N^{\mu \sigma + \nu}$ for some $\mu,\nu$. 
Thus, we have that 
\begin{align}\log \frac{\vt g(ic+2Ur^{i\theta}) \vt}{\vt g(ic) \vt} \ll (2U + \hf[\delta]) \log N. \end{align} 
So, it follows that $n(U) \leq O(U \log N)$, which concludes the proof.

\section{Acknowledgements}

The research was supported by the NSF-REU DMS-2150179 grant. The author thank
the University of North Carolina at Charlotte, in particular the mathematics and statistics department,
for their accommodations. 



\begin{thebibliography}{10}
	
	\bibitem{selberglost}
	A. Selberg. Old and New Conjectures and Results about a Class of Dirichlet
	Series. In: Bompieri, E., et al., Eds., Proceedings of the Amalfi Conference
	on Analytic Number Theory, Maiori, 367-385, 1992.
	
	\bibitem{abramowitzstegun}
	M.~Abramowitz and I.~A. Stegun.
	\newblock {\em Handbook of Mathematical Functions with Formulas, Graphs, and
		Mathematical Tables}.
	\newblock Dover, New York, ninth dover printing, tenth gpo printing edition,
	1964.
	
	\bibitem{CHANDRASEKHARAN1963}
	N.~B. CHANDRASEKHARAN, K.
	\newblock The approximate functional equation for a class of zeta-functions.
	\newblock {\em Mathematische Annalen}, 152:30--64, 1963.
	
	\bibitem{Conrey}
	J.~B. Conrey.
	\newblock More than two fifths of the zeros of the {R}iemann zeta function are
	on the critical line.
	\newblock {\em J. Reine Angew. Math.}, 399:1--26, 1989.
	
	\bibitem{CrFrRo}
	J.~Crim, W.~Frendreiss, and A.~Roy.
	\newblock Global behavior of the value distributions of {Riemann} zeta-function
	approximations.
	\newblock {\em to appear in Journal of Mathematical Analysis and Applications}.
	
	\bibitem{feng}
	S.~Feng.
	\newblock Zeros of the {R}iemann zeta function on the critical line.
	\newblock {\em J. Number Theory}, 132(4):511--542, 2012.
	
	\bibitem{gonek2007finite}
	S.~Gonek.
	\newblock Finite euler products and the riemann hypothesis.
	\newblock {\em Transactions of the American Mathematical Society}, 364, 05
	2007.
	
	\bibitem{Gonek_notes}
	S.~M. Gonek.
	\newblock Three lectures on the zeta function, July 2002.
	
	\bibitem{Montgomery-Gonek}
	S.~M. Gonek and H.~L. Montgomery.
	\newblock Zeros of a family of approximations of the {R}iemann zeta-function.
	\newblock {\em Int. Math. Res. Not. IMRN}, (20):4712--4733, 2013.
	
	\bibitem{lester}
	S.~J. Lester.
	\newblock {$a$}-points of the {R}iemann zeta-function on the critical line.
	\newblock {\em Int. Math. Res. Not. IMRN}, (9):2406--2436, 2015.
	
	\bibitem{Levinson//third}
	N.~Levinson.
	\newblock At least one-third of zeros of {R}iemann's zeta-function are on
	{$\sigma =1/2$}.
	\newblock {\em Proc. Nat. Acad. Sci. U.S.A.}, 71:1013--1015, 1974.
	
	\bibitem{Levinson//a-vals}
	N.~Levinson.
	\newblock Almost all roots of $\zeta(s)=a$ are arbitrarily close to
	$\sigma=1/2$.
	\newblock {\em Proceedings of the Nationals Academy of Sciences of the United
		States of America}, 72(4):1322--1324, 1975.
	
	\bibitem{LiNataRoy}
	J.~Li, M.~Nastasescu, A.~Roy, and A.~Zaharescu.
	\newblock Smooth {$L^2$} distances and zeros of approximations of {D}edekind
	zeta functions.
	\newblock {\em Manuscripta Math.}, 154(1-2):195--223, 2017.
	
	\bibitem{LiRoyZah}
	J.~Li, A.~Roy, and A.~Zaharescu.
	\newblock Zeros of a family of approximations of {H}ecke {$L$}-functions
	associated with cusp forms.
	\newblock {\em Ramanujan J.}, 41(1-3):391--419, 2016.
	
	\bibitem{Selberg//posconst}
	A.~Selberg.
	\newblock On the zeros of {R}iemann's zeta-function on the critical line.
	\newblock {\em Arch. Math. Naturvid.}, 45(9):101--114, 1942.
	
	\bibitem{Spira//approx_and_RH}
	R.~Spira.
	\newblock Approximate functional approximations and the {R}iemann hypothesis.
	\newblock {\em Proc. Amer. Math. Soc.}, 17:314--317, 1966.
	
	\bibitem{titchmarsh}
	E.~C. Titchmarsh.
	\newblock {\em The theory of the {R}iemann zeta-function}.
	\newblock The Clarendon Press, Oxford University Press, New York, second
	edition, 1986.
	\newblock Edited and with a preface by D. R. Heath-Brown.
	
\end{thebibliography}

\end{document}